\theoremstyle{plain}
\newtheorem{theorem}{Theorem}[section]
\newtheorem{lemma}[theorem]{Lemma}
\newtheorem{proposition}[theorem]{Proposition}
\newtheorem{refthm}{Theorem}
\theoremstyle{definition}
\newtheorem{example}[theorem]{Example}
\numberwithin{equation}{section}
\newcommand{\beq}{\begin{equation}}
\newcommand{\eeq}{\end{equation}}
\newcommand{\C}{\mathbb{C}}
\newcommand{\R}{\mathbb{R}}
\newcommand{\D}{\mathbb{D}}
\renewcommand{\Re}{\text{Re }}
\DeclareMathOperator{\sgn}{sgn}
\newcommand{\conj}[1]{\overline{#1}}
\newcommand{\bp}{\mathcal{P}}
\begin{document}

\title{Solution of Extremal Problems in Bergman Spaces Using the Bergman Projection}

\renewcommand\footnotemark{}
\author{Timothy Ferguson\footnote{Thanks to Peter Duren for his help in editing 
the manuscript, and 
to the referee for many helpful suggestions.}
}


\maketitle

\begin{abstract}
In this paper we discuss the explicit solution of certain extremal problems in 
Bergman spaces. In order to do this, we develop methods to calculate the 
Bergman projections of various functions.  As a special case, we deal with 
canonical divisors for certain values of $p$.
\end{abstract}

\section{Introduction}\label{intro}

This paper deals with linear extremal problems in Bergman spaces.  
The study of extremal problems in Bergman spaces was inspired by 
extremal problems in Hardy spaces, which
have been studied by various authors, notably by 
Macintyre and Rogosinski (see \cite{Macintyre_Rogosinski}), 
Rogosinski and Shapiro (see \cite{Rogosinski_Shapiro}), 
and S.~Ya.~Khavinson 
(see \cite{KhavinsonSYa1949} and \cite{KhavinsonSYa1951}).

Bergman space extremal problems have been studied by 
various authors, for example
in \cite{Khavinson_Stessin}, \cite{Dragan}, \cite{Ryabykh_certain_extp}, 
\cite{Hedenmalm_canonical_A2}, 
and \cite{DKSS_Pac}.  
See also the survey \cite{Beneteau_Khavinson_survey}.  
Regularity results for these problems have been studied in 
\cite{Ryabykh}, \cite{Khavinson_McCarthy_Shapiro}, 
\cite{tjf1}, and \cite{tjf2}.  
However, there are still no general methods for
finding solutions to these problems, and few explicit solutions are
available.  This is in contrast to the situation for Hardy spaces,
where a rich theory based on duality and functional analysis 
allows many extremal problems to be explicitly solved 
(see the references in the previous paragraph.)

This paper introduces methods for finding explicit solutions to certain 
extremal problems in Bergman spaces.  For example, we solve certain
minimal interpolation problems involving finding the smallest norm of
a Bergman space function when its value and the value of its first two
derivatives are specified at the origin. 
Similar results to ours are obtained in other works, for 
example 
\cite{Khavinson_Stessin}, \cite{Osipenko_Stessin}, and 
\cite{SheilSmall_2011}.
As another example, we find the function that maximizes 
the functional defined by 
$f \mapsto f^{(n)}(0) + bf(0)$ for certain values of $b$. 
The methods are based on theorems 
developed in the paper 
about the relation between the Bergman projection and extremal problems, 
as well as calculations of Bergman projections of various functions.  As 
a special case, we deal with canonical divisors, also known as contractive 
divisors, for certain $A^p$ spaces.

An analytic function $f$ in the unit disc $\D$ is said to belong to the 
Bergman space $A^p$ if 
\[  \|f\|_{A^p} = \left\{ \int_{\D} |f(z)|^p \, d\sigma(z)\right\}^{1/p} < \infty. \]
Here $\sigma$ denotes normalized area measure, so that $\sigma(\D)=1.$ 

For $1 < p < \infty,$ the dual of the Bergman space $A^p$ is isomorphic to $A^q$, 
where $1/p + 1/q = 1,$ and $k\in A^q$ represents the functional 
defined by 
\[
\phi(f) = \int_{\D} f(z)\conj{k(z)}\,d\sigma(z).
\]
Note that this isomorphism is actually conjugate-linear.  
It is not an isometry unless $p=2$, 
but  
if the functional $\phi \in (A^p)^*$ is represented by the 
function $k \in A^q$, then
\begin{equation}\label{A_q_isomorphism}
\| \phi \|_{(A^p)^*} \le \| k \|_{A^q} \le C_p \| \phi \|_{(A^p)^*}
\end{equation}
where $C_p$ is a constant depending only on $p$.

In this paper the only Bergman spaces we consider are those with
$1<p<\infty$. 
The case $p \le 1$ is more difficult because the proof of 
Theorem \ref{lin_ext_proj} fails for $p \le 1$. 
This theorem is a key result needed for our method of solving extremal 
problems. 
The proof of Theorem \ref{lin_ext_proj} relies on the boundedness of the 
Bergman projection on $L^p$, which fails for $p \le 1$.  
It also relies on H\"{o}lder's inequality, which fails for $p < 1$. 

For a given linear 
functional $\phi \in (A^p)^*$ such that $\phi \ne 0$,
we investigate the linear extremal problem of finding a function $F \in A^p$ 
with norm 
$\|F\|_{A^p} = 1$ for which
\begin{equation}\label{norm1}
\Re \phi(F) = \sup_{\|g\|_{A^p}=1} \Re \phi(g) = \| \phi \|.
\end{equation}
Such a function $F$ is called an extremal function, and 
we say that $F$ is an extremal function for a function $k \in A^q$ 
if $F$ solves problem 
\eqref{norm1} for the functional $\phi$ with kernel $k$. 
This problem has been studied by numerous authors (see the 
introduction and references for some examples). 
Note that for $p=2$, the extremal function is $F = k/\|k\|_{A^2}.$ 

A closely related problem is that of finding $f\in A^p$
such that 
$\phi(f) = 1$ and 
\begin{equation}\label{value1}
\|f\|_{A^p} = \inf_{\phi(g) = 1} \|g\|_{A^p}.
\end{equation}
If $F$ solves the problem \eqref{norm1}, then $\frac{F}{\phi(F)}$ 
solves the problem 
\eqref{value1}, and if $f$ solves \eqref{value1}, then 
$\frac{f}{\|f\|}$ solves \eqref{norm1}.
When discussing either of these problems, we always assume that $\phi$ is 
not the zero functional, in other words, that $k$ is not identically $0$. 

It is well known that the problems \eqref{norm1} and 
\eqref{value1} each have a unique solution 
when $1<p<\infty$ (see e.g.~\cite{Ryabykh}, or \cite{tjf1}, Theorem 1.4). 
Also, for every 
function $f \in A^p$ such that $f$ is not identically $0$, there 
is a unique $k \in A^q$ such that 
$f$ solves problem \eqref{value1} for $k$ 
(see e.g.~\cite{tjf1}, Theorem 3.3).  This implies that for each $F \in A^p$ with 
$\|F\|_{A^p} = 1$, there is some nonzero $k$ such that $F$ solves problem 
\eqref{norm1} for $k.$  Furthermore, any two such kernels $k$ are positive 
multiples of each other. 

The next result is an important characterization of extremal functions in 
$A^p$ for $1<p<\infty$ (see \cite{Shapiro_Approx}, p.~55). 
\begin{refthm}\label{integral_extremal_condition} 
Let $1 < p < \infty$ and let $\phi \in (A^p)^*$.   
A function $F \in A^p$ with $\|F\|_{A^p} = 1$ and 
$\Re \phi(F) > 0$ satisfies 
$$\Re \phi(F) = \sup_{\|g\|_{A^p} =1} \Re \phi(g) = \| \phi \|$$
if and only if
$$\int_{\D} h |F|^{p-1} \conj{\sgn F}  \, d\sigma = 0$$ for all 
$h \in A^p$ with $\phi(h) = 0.$  
If $F$ satisfies the above conditions, then 
$$\int_{\D} h |F|^{p-1} \conj{\sgn F}\,  d\sigma 
= \frac{\phi(h)}{\| \phi \|}$$
for all $h\in A^p.$
\end{refthm}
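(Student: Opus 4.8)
The plan is to introduce the auxiliary functional
\[ \psi(h) = \int_{\D} h\,|F|^{p-1}\conj{\sgn F}\,d\sigma, \]
whose ``kernel'' $|F|^{p-1}\sgn F$ lies in $L^q$ (though it is generally not analytic), and to show that $\psi$ coincides, up to a scalar, with $\phi$. First I would record two facts about $\psi$. Since $(p-1)q = p$, we have $\||F|^{p-1}\|_{L^q} = \bigl(\int_{\D}|F|^{p}\,d\sigma\bigr)^{1/q} = 1$, so H\"older's inequality gives $|\psi(h)| \le \|h\|_{A^p}$ and hence $\psi \in (A^p)^*$ with $\|\psi\| \le 1$. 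Moreover $F\,\conj{\sgn F} = |F|$, so $\psi(F) = \int_{\D}|F|^{p}\,d\sigma = 1$; combined with $\|\psi\|\le 1$ this forces $\|\psi\| = 1$, so that $F$ is a norming vector for $\psi$.

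For the forward implication I would argue variationally. If $\Re\phi(F) = \|\phi\|$ with $\|F\|_{A^p} = 1$, then $\phi(F) = \|\phi\|$ is real and positive, since $\Re\phi(F) \le |\phi(F)| \le \|\phi\|$. For any $h$ with $\phi(h) = 0$ the real function $t \mapsto \|F + th\|_{A^p}$ has a minimum at $t=0$, because $\|\phi\|\,\|F+th\|_{A^p} \ge \Re\phi(F+th) = \|\phi\|$. Differentiating $\int_{\D}|F+th|^{p}\,d\sigma$ and using
\[ \left.\frac{d}{dt}|F+th|^{p}\right|_{t=0} = p\,\Re\bigl(|F|^{p-1}(\sgn F)\conj{h}\bigr) \]
yields $\Re\psi(h) = 0$; replacing $h$ by $ih$, which is still admissible, recovers the imaginary part and gives $\psi(h) = 0$, the asserted condition.

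For the converse and the final formula I would use the decomposition $h = \bigl(h - \tfrac{\phi(h)}{\phi(F)}F\bigr) + \tfrac{\phi(h)}{\phi(F)}F$, which is legitimate because $\Re\phi(F) > 0$ guarantees $\phi(F) \ne 0$. The first summand lies in $\ker\phi$ and is annihilated by $\psi$ under the integral hypothesis, while $\psi(F) = 1$ handles the second, producing the single identity $\psi(h) = \phi(h)/\phi(F)$ for every $h \in A^p$. Bounding the left side by H\"older gives $|\phi(h)| \le |\phi(F)|\,\|h\|_{A^p}$, hence $\|\phi\| \le |\phi(F)| \le \|\phi\|$ and therefore $|\phi(F)| = \|\phi\|$. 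As soon as $\phi(F)$ is known to be real and positive we have $\phi(F) = \|\phi\|$, and the identity becomes $\psi(h) = \phi(h)/\|\phi\|$ for all $h$ --- simultaneously the vanishing condition on $\ker\phi$ and the final displayed formula.

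The delicate point, which I expect to be the crux, is the phase of $\phi(F)$. The integral condition is unaffected if $F$ is multiplied by a unimodular constant, so H\"older by itself yields only $|\phi(F)| = \|\phi\|$ rather than $\Re\phi(F) = \|\phi\|$. Upgrading to the real equality amounts to pinning $\phi(F)$ to the positive real axis, and this is where the sign assumption $\Re\phi(F) > 0$ together with the uniqueness of the solution of \eqref{norm1} (from the strict convexity of $A^p$ for $1 < p < \infty$) must be brought in: the equality case of H\"older identifies a unimodular multiple of $F$ with the unique extremal function for $\phi$, and the sign condition is used to fix the correct orientation. Establishing this normalization cleanly, rather than the variational computation, is the step I anticipate requiring the most care.
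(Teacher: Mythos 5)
Your forward direction and your decomposition argument for the converse are correct, and they are the standard route: the paper itself gives no proof of this statement (it cites Shapiro, \emph{Topics in Approximation Theory}, p.~55), and your variational computation for the ``only if'' half is exactly the content of the paper's quoted Theorem~\ref{alt_integral_extremal_condition} applied to $X=\ker\phi$. The estimates $\|\psi\|\le 1$, $\psi(F)=1$, and the identity $\psi(h)=\phi(h)/\phi(F)$ giving $|\phi(F)|=\|\phi\|$ are all sound (modulo routinely justifying the differentiation under the integral sign by dominated convergence, which works precisely because $p>1$).

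The gap is the step you flagged, and it is worse than delicate: it cannot be closed under the stated hypothesis $\Re\phi(F)>0$. Your own observation is the proof of this: the integral condition is invariant under $F\mapsto e^{i\alpha}F$, while $\Re\phi(F)>0$ survives small rotations, so no argument can pin $\phi(F)$ to the positive real axis. Concretely, take $p=2$, $\phi(f)=f(0)$ (so $\|\phi\|=1$), and $F=e^{i\theta}$ with $0<\theta<\pi/2$. Since $\int_{\D}h\,d\sigma=h(0)$, one has $\int_{\D}h|F|^{p-1}\conj{\sgn F}\,d\sigma=e^{-i\theta}h(0)=0$ for every $h\in\ker\phi$, and $\Re\phi(F)=\cos\theta>0$, yet $\Re\phi(F)<1=\|\phi\|$: the ``if'' direction fails as literally stated. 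The appeal you sketch to H\"older equality plus uniqueness of the extremal function only identifies $F$ as $e^{i\theta}G$ with $G$ the extremal function and $|\theta|<\pi/2$ --- which is exactly the obstruction, not a resolution. The correct hypothesis for the converse is $\phi(F)>0$ (positive real), which is what extremality itself forces and what is in force everywhere the theorem is used in the paper (e.g.\ in the proof of Theorem~\ref{lin_ext_proj}, where $\psi(F)=1$); under that hypothesis your decomposition finishes in one line, since $\psi=\phi/\phi(F)$ and $\|\psi\|=1$ give $\phi(F)=|\phi(F)|=\|\phi\|$, and the final displayed formula is then immediate from $\psi(h)=\phi(h)/\phi(F)$.
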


The following may also be found in \cite{Shapiro_Approx}, p.~55.
\begin{refthm}\label{alt_integral_extremal_condition}
Suppose that $X$ is a closed subspace of $L^p(\D)$, for $1<p<\infty$.  
Let $F \in L^p$ and suppose that for all $h \in X$, 
we have $\|F\| \le \|F + h\|.$  Then, 
\[
\int_{\D} h|F|^{p-1} \conj{\sgn F} d\sigma = 0
\]
for all $h \in X.$   
\end{refthm}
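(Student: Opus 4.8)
The hypothesis says exactly that $0$ minimizes $\|F+h\|$ over $h\in X$; equivalently, $F$ has least $L^p$ norm in its coset $F+X$. The natural strategy is therefore to exploit that a minimum forces the vanishing of a directional derivative of the norm. The plan is to fix $h\in X$, use that $X$ is a subspace so that $th\in X$ for every scalar $t$, and study the real function
\[
\psi(t) = \int_{\D} |F+th|^p \, d\sigma, \qquad t\in\R.
\]
By hypothesis $\psi(t) = \|F+th\|^p \ge \|F\|^p = \psi(0)$ for all real $t$, so $\psi$ attains a global minimum at $t=0$. If I can show $\psi$ is differentiable at $0$, then $\psi'(0)=0$, and computing $\psi'(0)$ will produce the asserted identity.

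To compute $\psi'(0)$ I would differentiate under the integral sign. For fixed $z$, the map $t\mapsto |F(z)+th(z)|^p$ is continuously differentiable on $\R$ when $p>1$: the only possible trouble is at a $t_0$ where $F(z)+t_0 h(z)=0$, but there $|F(z)+th(z)|^p$ behaves like $|h(z)|^p\,|t-t_0|^p$, which is $C^1$ for $p>1$ with derivative $0$ at $t_0$. Away from such points,
\[
\frac{d}{dt}\,|F+th|^p = p\,|F+th|^{p-2}\,\Re\big(h\,\conj{(F+th)}\big).
\]
Setting $t=0$ and using $\conj{F}=|F|\,\conj{\sgn F}$ gives the pointwise derivative $p\,|F|^{p-1}\,\Re(h\,\conj{\sgn F})$, which also records the correct value $0$ on the set where $F=0$.

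To pass the derivative inside the integral, I would dominate the difference quotients. By the mean value theorem (applicable since each $t\mapsto|F+th|^p$ is $C^1$) the difference quotient $\big(|F+th|^p-|F|^p\big)/t$ equals $p\,|F+sh|^{p-2}\,\Re\big(h\,\conj{(F+sh)}\big)$ for some $s$ between $0$ and $t$, and for $|t|\le 1$ this is bounded in absolute value by $p(|F|+|h|)^{p-1}|h|$. This dominating function is integrable by H\"older's inequality with exponents $p$ and $q=p/(p-1)$, since $\big\|(|F|+|h|)^{p-1}\big\|_q\,\|h\|_p = \big\||F|+|h|\big\|_p^{p-1}\,\|h\|_p<\infty$. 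Dominated convergence then yields
\[
\psi'(0) = p\,\Re \int_{\D} h\,|F|^{p-1}\,\conj{\sgn F}\, d\sigma,
\]
and $\psi'(0)=0$ shows this real part vanishes for every $h\in X$.

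Finally, to upgrade from the real part to the full integral, I would replace $h$ by $ih$, which again lies in $X$. Since $\Re\int_{\D}(ih)\,|F|^{p-1}\,\conj{\sgn F}\,d\sigma = -\Im\int_{\D} h\,|F|^{p-1}\,\conj{\sgn F}\,d\sigma$, the imaginary part vanishes as well, and the claimed identity follows. The main obstacle is the analytic justification of differentiating under the integral, i.e. producing the integrable dominating function; this is exactly where $p>1$ is essential, through both H\"older's inequality and the $C^1$ behavior of $|z|^p$. For $p\le 1$ this step breaks down, consistent with the remark preceding the theorem.
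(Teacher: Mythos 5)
Your proof is correct and complete. One thing to be aware of: the paper does not actually prove this statement --- it is quoted as a known result from \cite{Shapiro_Approx}, p.~55 --- so there is no internal argument to compare against; your variational proof (minimality of $t=0$ for $\psi(t)=\|F+th\|_p^p$, pointwise $C^1$ regularity of $t\mapsto|F(z)+th(z)|^p$ for $p>1$ including the vanishing-point case, mean value theorem plus the H\"older-integrable dominating function $p(|F|+|h|)^{p-1}|h|$, and then polarization via $h\mapsto ih$) is precisely the standard argument underlying that reference, and it correctly isolates where $p>1$ enters, matching the paper's remark that the method fails for $p\le 1$. Two small points worth making explicit. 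First, the closedness of $X$ is never used in your argument --- it is irrelevant to this first-order necessary condition (it matters for existence of minimizers, not here) --- so your proof in fact establishes a slightly more general statement. Second, the replacement of $h$ by $ih$ requires $X$ to be a subspace over $\C$; that is the intended reading in the paper, where $X$ arises as the common kernel of complex-linear functionals on $A^p$, but it deserves a word. Finally, you implicitly adopt the convention $\sgn F = 0$ on the set $\{F=0\}$, which is what makes the pointwise derivative $p\,|F|^{p-1}\Re\bigl(h\,\conj{\sgn F}\bigr)$ valid everywhere; you note this, and it is the convention the paper uses as well.
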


Because point evaluation is a bounded linear functional on the Hilbert space 
$A^2$, the space 
$A^2$ has a reproducing kernel $K(z,\zeta)$, called the 
\emph{Bergman kernel}, with the property that  
\beq\label{eq_intro_reproducing_formula}
f(z) = \int_{\D} K(z, \zeta) f(\zeta) \, d\sigma(\zeta)
\eeq
for all $f \in A^2$ and for all $z \in \D.$  One can show that 
$$K(z,\zeta) = \frac{1}{(1-\conj{\zeta}z)^2}.$$  Since the polynomials are 
dense in $A^1$, we have that \eqref{eq_intro_reproducing_formula} holds 
for all $f \in A^1.$  

In fact, for any $f$ in $L^1$ we many define the Bergman projection 
$\bp$ by 
\[
(\bp f)(z) = \int_{\D} \frac{f(\zeta)}{(1-\conj{\zeta}z)^2}\, d\sigma(\zeta).
\]
The Bergman projection maps $L^1$ into the space of functions analytic in $\D.$  A non-trivial 
fact is that $\bp$ also maps $L^p$ boundedly onto $A^p$ for $1<p<\infty.$  
If $p = 2,$ then $\bp$ is just the orthogonal projection of $L^2$ onto 
$A^2.$ 

The rest of this paper is organized as follows.  In section
\ref{rel_berg_ext}, we prove various theorems relating the Bergman projection 
to extremal problems.  In section \ref{calc_bp}, we calculate various 
Bergman projections.  We use these results in section 
\ref{specific_probs} to solve some extremal problems explicitly.  Lastly, 
in section \ref{canon_divisors}, we apply our results to the study of 
canonical divisors in $A^p$ when $p$ is an even integer.

\section{Relation of the Bergman Projection to Extremal Problems}\label{rel_berg_ext}
In this section we show how information about the Bergman projection can be used to 
solve certain extremal problems. 
We begin with a basic theorem that is obvious but quite useful.
\begin{theorem}\label{bp_equality} 
Suppose that $1<p<\infty$ and 
let $f \in A^p$ and $g \in L^q$, where $1/p + 1/q = 1.$ 
Then 
\[
\int_{\D} f \conj{g}\, d\sigma = 
\int_{\D} f \, \conj{\bp(g)}\, d\sigma. 
\]
\end{theorem}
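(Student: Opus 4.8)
The plan is to work from the right-hand side, expand the definition of the Bergman projection, interchange the order of integration by Fubini, and then recognize the inner integral as a reproducing-kernel integral that returns $f$. First I would write out $\conj{\bp(g)}$ explicitly. Conjugating the defining formula for $\bp$ gives
\[
\conj{(\bp g)(z)} = \int_{\D} \frac{\conj{g(\zeta)}}{(1 - \zeta\conj{z})^2}\, d\sigma(\zeta),
\]
so that
\[
\int_{\D} f(z)\conj{(\bp g)(z)}\, d\sigma(z) = \int_{\D}\int_{\D} \frac{f(z)\conj{g(\zeta)}}{(1 - \zeta\conj{z})^2}\, d\sigma(\zeta)\, d\sigma(z).
\]

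Next I would invoke Fubini's theorem to exchange the two integrations, obtaining
\[
\int_{\D} \conj{g(\zeta)} \left( \int_{\D} \frac{f(z)}{(1 - \conj{z}\zeta)^2}\, d\sigma(z) \right) d\sigma(\zeta),
\]
where I have used that $1 - \zeta\conj{z} = 1 - \conj{z}\zeta$. The inner integral is exactly the reproducing integral $\int_{\D} K(\zeta, z) f(z)\, d\sigma(z)$ with $K(\zeta,z) = (1-\conj{z}\zeta)^{-2}$, so since $f \in A^p \subseteq A^1$, formula \eqref{eq_intro_reproducing_formula} yields the value $f(\zeta)$. Substituting this in gives $\int_{\D} f\conj{g}\, d\sigma$, which is the desired identity.

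The one step that requires genuine care is the application of Fubini, since the kernel $(1 - \zeta\conj{z})^{-2}$ is unbounded near the boundary. To justify it I would bound the absolute value of the double integrand and verify that
\[
\int_{\D}\int_{\D} \frac{|f(z)|\,|g(\zeta)|}{|1 - \zeta\conj{z}|^2}\, d\sigma(\zeta)\, d\sigma(z) < \infty.
\]
Writing $(T|g|)(z) = \int_{\D} |1 - \zeta\conj{z}|^{-2}\,|g(\zeta)|\, d\sigma(\zeta)$, the boundedness on $L^q(\D)$ (for $1<q<\infty$) of the integral operator with the nonnegative kernel $|1 - \zeta\conj{z}|^{-2}$ — the same estimate that underlies the boundedness of $\bp$ on $L^q$ — gives $\|T|g|\|_{L^q} \le C\,\|g\|_{L^q}$. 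Hölder's inequality then bounds the double integral by $\|f\|_{A^p}\,\|T|g|\|_{L^q}$, which is finite because $f \in A^p$ and $g \in L^q$. This absolute integrability is the main obstacle; everything else is a direct computation. As an alternative that sidesteps the operator-norm estimate, one could first establish the identity for $f$ a polynomial, where absolute integrability is transparent, and then pass to general $f \in A^p$ by density, using the boundedness of $\bp$ on $L^q$ to control $\conj{\bp g}$ in the limit.
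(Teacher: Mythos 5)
Your argument is correct, but it takes a genuinely different route from the paper's. The paper disposes of the identity in two lines: for $p=2$ it is just the statement that $\bp$ is the orthogonal projection of $L^2$ onto $A^2$, and the general case $1<p<\infty$ then follows by a routine approximation argument (approximate $g$ in $L^q$ by bounded functions and $f$ in $A^p$ by, say, dilations, passing to the limit via H\"older and the boundedness of $\bp$ on $L^q$). You instead prove the identity directly: expand $\conj{\bp(g)}$, apply Fubini, and invoke the reproducing formula \eqref{eq_intro_reproducing_formula} --- legitimate for $f \in A^p \subseteq A^1$, since $\sigma(\D)=1$ --- which exhibits the identity as a consequence of the Hermitian symmetry $K(z,\zeta) = \conj{K(\zeta,z)}$ of the Bergman kernel. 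The one caveat is that your Fubini justification uses the $L^q$-boundedness of the \emph{positive} operator with kernel $|1-\conj{\zeta}z|^{-2}$; this is a standard fact (Schur's test, or the Forelli--Rudin estimates, and it is indeed how the boundedness of $\bp$ is usually proved), but it is strictly stronger than what the paper actually quotes, namely the boundedness of $\bp$ itself, so it deserves an explicit citation rather than the phrase ``the same estimate that underlies the boundedness of $\bp$.'' Your fallback (prove the identity for polynomials, then pass to general $f$ by density, controlling $\conj{\bp(g)}$ via the boundedness of $\bp$ on $L^q$) is essentially the paper's approximation argument in different clothing; note that even for polynomial $f$ the absolute convergence is not entirely transparent --- it rests on $\int_{\D} |1-\conj{\zeta}z|^{-2}\, d\sigma(z) = O\bigl(\log(1/(1-|\zeta|))\bigr)$ together with H\"older, though this is routine. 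In sum: the paper's proof is shorter given the quoted mapping properties of $\bp$, while yours is more self-contained modulo the positive-kernel estimate and makes visible \emph{why} the identity holds.
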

\begin{proof}
The case $p=2$ follows from the fact that $\bp$ is the orthogonal projection from 
$L^2$ onto $A^2$.  The other cases follow from a routine approximation argument, using 
the fact that $\bp : L^p \rightarrow A^p$ boundedly.
\end{proof}

The next theorem gives the first indication of how the Bergman projection 
is related to extremal problems.
\begin{theorem}\label{lin_ext_proj} Suppose that $1<p<\infty$. 
Let $F \in A^p$  with $\|F\|_{A^p} = 1.$
Then $F$ is the extremal function for the functional with kernel 
\[
k = \bp(|F|^{p-1} \sgn F) \in A^q.
\]
Furthermore, if $F$ is the extremal function for some 
functional $\phi \in (A^p)^*$ with kernel 
$k \in A^q,$ then 
$$
\bp(|F|^{p-1} \sgn F) = \frac{k}{\|\phi\|}.
$$
\end{theorem} 
\begin{proof}
Consider the functional $\psi \in (A^p)^*$ that takes a function 
$f \in A^p$ to 
$$
\psi(f) = \int_{\D} f |F|^{p-1}\conj{\sgn F}\, d\sigma.
$$
This functional has norm at most $\|\, |F|^{p-1}\conj{\sgn F} \,\|_{L^q} = 
\|F\|^{p/q}_{L^p} = 1$.  But also $\psi(F) = \|F\|^p_{A^p} = 1$, so $\psi$ has norm 
exactly $1$ and $F$ is the extremal function for $\psi$.  

But from Theorem \ref{bp_equality}, it follows that 
$$
\int_{\D} f\, \conj{\bp(|F|^{p-1}\sgn F)}\, d\sigma = 
\int_{\D} f |F|^{p-1}\conj{\sgn F}\, d\sigma 
$$
for any $f \in A^p$, which means that $\bp(|F|^{p-1}\sgn F)$ 
is the kernel in $A^q$  
representing $\psi$.  This proves the first part of 
the theorem.

If $F$ is the 
extremal function for $\phi$, then $\psi$ is a 
positive scalar multiple of $\phi$ (see Section \ref{intro}.)
Since $\| \psi\| =1$ and $\psi$ is a positive scalar 
multiple of $\phi$, it must be that $\psi = \phi/\|\phi\|.$ 
But this implies that $\bp(|F|^{p-1}\sgn F) = k/\|\phi\|.$ 
\end{proof}

The next result, Theorem \ref{min_int1},
describes the relation of the Bergman projection to 
a sort of generalized minimal interpolation problem.  The problem is 
to find the function of smallest norm such that prescribed
 linear functionals acting on the function take prescribed values.  
We will first need the following lemma.  
\begin{lemma}\label{lin_alg_lemma}
Let $V$ be a vector space over $\C$, and let $\phi, \phi_1, \ldots, \phi_N$ be 
linear functionals on $V$ such that, for $v \in V,$ 
if $\phi_1(v) = \cdots = \phi_N(v) = 0$, then $\phi(v) = 0.$  Then 
$\phi = \sum_{j=1}^N c_j \phi_j$ for some constants $c_j.$ 
\end{lemma}
The statement and proof of this lemma may be found in 
\cite{Conway_Functional} in Appendix A.2 as Proposition 1.4.

\begin{theorem}\label{min_int1}
Let $1<p<\infty$ and let $\phi_1, \phi_2, \ldots, \phi_N \in (A^p)^*$ be linearly independent.  
Then a function $F \in A^p$ satisfies
$$ \|F\|_{A^p} = \inf\{\|f\|_{A^p} : \phi_1(f) = \phi_1(F), \ldots ,
\phi_N(f) = \phi_N(F) \}$$
if and only if 
$\bp(|F|^{p-1} \sgn F)$ is a linear combination of the kernels of 
$\phi_1, \ldots, \phi_N.$
\end{theorem}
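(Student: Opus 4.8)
The plan is to reduce this statement to the single-functional characterization in Theorem \ref{integral_extremal_condition} by collapsing the finitely many constraints into one. The key observation is that the minimization problem over the affine space $\{f : \phi_j(f) = \phi_j(F)\text{ for all }j\}$ is, after translating by $F$, a best-approximation problem over the linear subspace
\[
X = \{h \in A^p : \phi_1(h) = \cdots = \phi_N(h) = 0\}.
\]
Indeed, writing a competitor as $f = F + h$ with $h \in X$, the constraint $\phi_j(f) = \phi_j(F)$ holds exactly when $h \in X$, so $F$ is extremal for the interpolation problem if and only if $\|F\| \le \|F + h\|$ for all $h \in X$.

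For the forward direction, I would invoke Theorem \ref{alt_integral_extremal_condition} (or the embedded second half of Theorem \ref{integral_extremal_condition}): the minimality of $\|F\|$ against all $F + h$, $h \in X$, yields
\[
\int_{\D} h \, |F|^{p-1} \conj{\sgn F}\, d\sigma = 0 \quad \text{for all } h \in X.
\]
By Theorem \ref{bp_equality}, this integral equals $\int_{\D} h\, \conj{\bp(|F|^{p-1}\sgn F)}\, d\sigma$. Defining the functional $\phi$ on $A^p$ by $\phi(h) = \int_{\D} h\, \conj{k}\, d\sigma$ with kernel $k = \bp(|F|^{p-1}\sgn F)$, we have shown $\phi$ vanishes on $X$, i.e. $\phi$ annihilates the common null space of $\phi_1, \ldots, \phi_N$. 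Lemma \ref{lin_alg_lemma} then forces $\phi = \sum_{j=1}^N c_j \phi_j$, and since each $\phi_j$ has some kernel $k_j \in A^q$ with the functionals in bijection with their kernels, $k = \sum_j \conj{c_j}\, k_j$ is the desired linear combination (the conjugation bookkeeping coming from the conjugate-linear nature of the $A^q$ representation noted in Section \ref{intro}).

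For the converse, suppose $k = \bp(|F|^{p-1}\sgn F) = \sum_j c_j k_j$ is a combination of the kernels. Then for every $h \in X$ we have $\phi_j(h) = 0$ for all $j$, so using Theorem \ref{bp_equality} again,
\[
\int_{\D} h\, |F|^{p-1}\conj{\sgn F}\, d\sigma
= \int_{\D} h\, \conj{k}\, d\sigma
= \sum_j \conj{c_j}\, \phi_j(h) = 0.
\]
This is precisely the orthogonality condition, so by the sufficiency direction of the extremal characterization (Theorem \ref{integral_extremal_condition}, applied to the functional whose kernel is $k$, after normalizing $F$), $F$ minimizes the norm over $F + X$, which is the interpolation statement.

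The main obstacle I anticipate is bookkeeping rather than analysis: one must correctly identify the kernels $k_j$ of the given functionals $\phi_j$ and track the conjugate-linear correspondence between functionals and their $A^q$ representers, so that "linear combination of kernels" on the analytic side matches "linear combination of functionals" on the dual side. A secondary point is confirming that linear independence of the $\phi_j$ is used only to make the statement nondegenerate (so that the $k_j$ are genuinely independent and the combination is meaningful); the core equivalence itself flows from Lemma \ref{lin_alg_lemma} together with the two forms of the extremal condition, neither of which requires independence.
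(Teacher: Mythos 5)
Your forward direction coincides with the paper's own proof essentially line for line: translate the constraint set to the subspace $X$, apply Theorem \ref{alt_integral_extremal_condition}, rewrite the integral via Theorem \ref{bp_equality}, and invoke Lemma \ref{lin_alg_lemma} to express the functional $\psi(f)=\int_{\D} f\,\conj{\bp(|F|^{p-1}\sgn F)}\,d\sigma$ as $\sum_j c_j\phi_j$. (You are also right that linear independence of the $\phi_j$ is not used in the equivalence itself; the paper's proof never invokes it either.) The converse, however, has a scoping error in how you cite Theorem \ref{integral_extremal_condition}. That theorem concerns a \emph{single} functional $\psi$ and its sufficiency direction requires the orthogonality $\int_{\D} h\,|F|^{p-1}\conj{\sgn F}\,d\sigma=0$ for all $h$ in the full null space of $\psi$, which strictly contains $X$ as soon as $N\ge 2$; you verified it only on $X$. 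Moreover, the theorem's conclusion is that $F$ is extremal for $\psi$, i.e.\ minimal in norm subject to the single constraint $\psi(f)=\psi(F)$ --- not, as you wrote, that $F$ minimizes over $F+X$. As stated, neither the hypothesis you checked nor the conclusion you drew matches the theorem cited.

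Both mismatches are repairable with material already in your argument. By Theorem \ref{bp_equality}, $\int_{\D} h\,|F|^{p-1}\conj{\sgn F}\,d\sigma=\psi(h)$ for \emph{every} $h\in A^p$, so the orthogonality on $\ker\psi$ is automatic; with $F$ normalized, $\psi(F)=\|F\|_{A^p}^p>0$, so Theorem \ref{integral_extremal_condition} does make $F$ extremal for $\psi$ (this is exactly the first half of Theorem \ref{lin_ext_proj}). Then, since the hypothesis gives $\psi=\sum_j \conj{c_j}\,\phi_j$, any competitor with $\phi_j(f)=\phi_j(F)$ for all $j$ satisfies $\psi(f)=\psi(F)$, whence $\|f\|_{A^p}\ge\|F\|_{A^p}$ --- this is precisely the ``easy direction'' argument the paper records after the theorem relating interpolation to linear extremal problems. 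The paper's own converse avoids Theorem \ref{integral_extremal_condition} entirely and is self-contained: it writes $\|F\|_{A^p}^p=\int_{\D}(F+h)\,\conj{\bp(|F|^{p-1}\sgn F)}\,d\sigma=\int_{\D}(F+h)\,|F|^{p-1}\conj{\sgn F}\,d\sigma$ for $h\in X$ and applies H\"older's inequality to get $\|F\|_{A^p}\le\|F+h\|_{A^p}$, sidestepping normalization and positivity hypotheses altogether. Either patch is two lines; your overall architecture is sound.
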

Note that this theorem gives a necessary and sufficient condition for 
a function $F$ to solve the minimal interpolation problem of finding 
a function $f \in A^p$ of smallest norm such that 
$\phi_j(f) = c_j$ for $1 \le j \le N$,
where $\phi_j \in (A^p)^*$ are arbitrary linearly independent functionals 
and the $c_j$ are given constants. 
Namely, $F$ solves the 
problem if and only if $\phi_j(F) = c_j$ for $1 \le j \le N$ and 
$\bp(|F|^{p-1} \sgn F)$ is a linear combination of the kernels of 
$\phi_1, \ldots, \phi_N$.  
Note that for the case 
$1<p<\infty$,  the problem under discussion will always have 
a unique solution (see e.g.~\cite{tjf1}, Proposition 1.3).
\begin{proof}
Let $k_1, \ldots, k_N$ be the kernels of $\phi_1, \ldots, \phi_N$, 
respectively.  
Suppose that 
$$ \|F\|_{A^p} = \inf\{\|f\|_{A^p} : \phi_1(f) = \phi_1(F), \ldots ,
\phi_N(f) = \phi_N(F) \}$$
 and let 
$h$ be any non-zero 
$A^p$ function such that $\phi_1(h) = \cdots = \phi_N(h) = 0.$ 
Since there are only a finite number of the $\phi_j$, it is clear that 
such a function exists.  
Then 
$F + h$ is also in contention to solve the extremal problem, 
so $\|F\| \le \|F + h\|$.  Now Theorem \ref{alt_integral_extremal_condition} 
shows that 
\[
\int_{\D} |F|^{p-1} \conj{\sgn F}\, h\, d\sigma = 0,
\]
and so by Theorem \ref{bp_equality} 
\[
\int_{\D} \conj{\bp(|F|^{p-1} \sgn F)}\, h \,d\sigma = 0.
\]
Define 
\[
\qquad \psi(f) = \int_{\D} \conj{\bp(|F|^{p-1} \sgn F )}\, f\, d\sigma, \qquad
f \in A^p.
\]
Lemma \ref{lin_alg_lemma} now shows that $$\psi = \sum_{j=1}^N c_j \phi_j,$$ for some 
constants $c_j$,
so $\bp(|F|^{p-1} \sgn F)$ is a linear combination of $k_1, \ldots, k_n.$ 
This proves the ``only if'' part of the theorem.

Conversely, suppose $ \bp(|F|^{p-1}\sgn F)$ is a linear combination of 
$k_1, \ldots, k_n.$  Now 
\beq\label{min_int1_eq1}
\|F\|_{A^p}^p = \int_{\D} F |F|^{p-1}\conj{\sgn F}\,d\sigma = 
\int_{\D} F\,  \conj{\bp(|F|^{p-1}\sgn F)}\,d\sigma,
\eeq 
by Theorem \ref{bp_equality}.
Now let $h \in A^p$ be such that $\phi_j(h) = 0$ for  
$1 \le j \le N.$  Since  
$\bp(|F|^{p-1}\sgn F )$ is a linear combination of the $k_j$,  
equation \eqref{min_int1_eq1} gives
\[
\begin{split}
\|F\|_{A^p}^p &= \int_{\D} (F+h)\conj{\bp(|F|^{p-1}\sgn F)}\,d\sigma\\ &=
\int_{\D} (F+h) |F|^{p-1}\conj{\sgn F}\,d\sigma \\ 
&\le \|F + h\|_{A^p} \||F|^{p-1}\conj{\sgn F} \|_{A^q}\\ &= 
\|F+h\|_{A^p} \|F\|_{A^p}^{p-1}.
\end{split}
\]
Therefore,
\[
\|F\|_{A^p} \le \|F+h\|_{A^p}.
\]
Since $h$ was an arbitrary $A^p$ function with the property that 
$\phi_j(h) = 0$ for $1\le j \le N$,
this shows that $F$ solves the extremal problem in question. 
\end{proof}
When we apply this 
theorem, we will usually have each $\phi_j$ be a  
derivative-evaluation functional. By derivative-evaluation functional, we 
mean a functional defined by $f \mapsto f^{(n)}(z_0)$ for some integer 
$n \ge 0$ and some $z_0 \in \D.$ 
Note that the theorem implies that, if 
$\phi_1, \phi_2, \ldots, \phi_N \in (A^p)^*$ are linearly independent, 
then the following two statements are equivalent:\\
1. $F$ satisfies
$$ \|F\|_{A^p} = \inf\{\|f\|_{A^p} : \phi_1(f) = \phi_1(F), \ldots, 
\phi_N(f) = \phi_N(F) \}$$
but does not satisfy
$$ \|F\|_{A^p} = \inf\{\|f\|_{A^p} : \phi_{j_1}(f) = \phi_{j_1}(F), \ldots, 
\phi_{j_{M}}(f) = \phi_{j_{M}}(F) \}$$
for any proper subsequence $\{j_k\}_{k=1}^{M}$ of $1, 2, \ldots, N$.
\\
2. 
$\bp(|F|^{p-1} \sgn F)$ is a linear combination of the kernels of 
$\phi_1, \ldots, \phi_N$, and none of the coefficients in the linear 
combination is $0$.

The next theorem is a special case of Theorem 
\ref{min_int1}, with the functionals taken to be 
$\phi_j(h) = h^{(j)}(0)$, with kernels $k_j(z) = (j+1)! z^j.$ 
\begin{theorem}\label{min_int_poly1}
The function $\bp(|F|^{p-1}\sgn F)$ is a polynomial of degree at most $N$ 
if and only if
\[
\|F\|_{A^p} = \inf \{\|f\|_{A^p} : f(0) = F(0), \ldots, f^{(N)}(0)=F^{(N)}(0)\}.
\]
It is a polynomial of degree exactly $N$ 
if and only if $N$ is the smallest integer such that the above
conditions holds.
\end{theorem}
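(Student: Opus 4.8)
The plan is to read off this statement as the special case of Theorem \ref{min_int1} in which the functionals are the derivative evaluations at the origin, $\phi_j(h) = h^{(j)}(0)$ for $j = 0, 1, \ldots, N$. The first task is to confirm that the kernel of $\phi_j$ is $k_j(z) = (j+1)!\, z^j$. This follows from the elementary orthogonality relation
\[
\int_{\D} z^m \conj{z^n}\, d\sigma = \frac{\delta_{mn}}{m+1},
\]
computed in polar coordinates using $\sigma(\D) = 1$. Writing $h(z) = \sum_m a_m z^m$ and pairing against $k_j$ gives $\int_{\D} h\, \conj{k_j}\, d\sigma = (j+1)! \cdot \frac{a_j}{j+1} = j!\, a_j = h^{(j)}(0)$, as desired. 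Since each $k_j$ is a nonzero multiple of $z^j$, the kernels $k_0, \ldots, k_N$ are linearly independent, so the same holds for the functionals $\phi_0, \ldots, \phi_N$, and Theorem \ref{min_int1} applies.

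The decisive observation is then purely algebraic: the linear span of $k_0, \ldots, k_N$ is precisely the set of polynomials of degree at most $N$. Hence Theorem \ref{min_int1} asserts that $F$ attains the infimum of $\|f\|_{A^p}$ over all $f$ agreeing with $F$ at the origin through order $N$ if and only if $\bp(|F|^{p-1}\sgn F)$ belongs to this span, that is, if and only if it is a polynomial of degree at most $N$. This is exactly the first equivalence.

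For the second statement I would first record that, for $F \not\equiv 0$, the function $k := \bp(|F|^{p-1}\sgn F)$ is not identically zero: by Theorem \ref{bp_equality},
\[
\int_{\D} F\, \conj{k}\, d\sigma = \int_{\D} F\, |F|^{p-1} \conj{\sgn F}\, d\sigma = \|F\|_{A^p}^p > 0.
\]
Thus $k$ has a well-defined degree $M \ge 0$. By the first part, the interpolation condition holds for a given $N$ exactly when $M \le N$, so the set of admissible $N$ is $\{M, M+1, M+2, \ldots\}$ and its least element is $M$. Consequently $k$ has degree exactly $N$ if and only if $N$ is the smallest integer for which the condition holds.

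I do not anticipate a substantive obstacle, as the theorem is a corollary of Theorem \ref{min_int1}; the only points demanding care are pinning down the normalizing constant $(j+1)!$ in the kernel and the bookkeeping by which ``degree at most $N$'' together with minimality in $N$ upgrades to ``degree exactly $N$''.
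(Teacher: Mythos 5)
Your proposal is correct and matches the paper's treatment exactly: the paper proves Theorem \ref{min_int_poly1} simply by noting it is the special case of Theorem \ref{min_int1} with $\phi_j(h) = h^{(j)}(0)$ and kernels $k_j(z) = (j+1)!\,z^j$, whose span is the polynomials of degree at most $N$. You supply details the paper leaves implicit (the kernel computation, linear independence, and the nonvanishing of $\bp(|F|^{p-1}\sgn F)$ used to make the ``degree exactly $N$'' bookkeeping rigorous), all of which are sound.
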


The next theorem relates the generalized minimal interpolation problems 
we have been discussing with linear extremal problems. 
\begin{theorem} 
Let $\phi_1, \ldots, \phi_N$ be linearly independent elements of 
$(A^p)^*$ with kernels 
$k_1, \ldots, k_N$ respectively, and let 
$F\in A^p$ with $\|F\|_{A^p} = 1$.
Then the functional for which $F$ is the extremal function has as its kernel 
a linear combination of 
the $k_j$ if and only if 
\[
\|F\|_{A^p} = \inf \{\|f\|_{A^p} : \phi_1(f) = \phi_1(F), \ldots, 
\phi_N(f) = \phi_N(F) \}.
\]
\end{theorem}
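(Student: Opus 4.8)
The plan is to derive this theorem by simply chaining together Theorems \ref{lin_ext_proj} and \ref{min_int1}, since both are already phrased in terms of the single object $\bp(|F|^{p-1}\sgn F)$, which serves as the bridge between the extremal functional and the minimal interpolation problem. No new estimates should be needed; the entire content lies in matching up the hypotheses of the two prior theorems.

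First I would recall from Section \ref{intro} that, because $\|F\|_{A^p} = 1$, there exists a nonzero kernel $k$ such that $F$ is the extremal function for the functional with kernel $k$, and that any two such kernels are positive scalar multiples of one another. By Theorem \ref{lin_ext_proj}, one admissible choice of this kernel is $\bp(|F|^{p-1}\sgn F)$. Hence the kernel of the functional for which $F$ is extremal is a positive scalar multiple of $\bp(|F|^{p-1}\sgn F)$.

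Next I would use the elementary observation that membership in the linear span of $k_1, \ldots, k_N$ is invariant under multiplication by a nonzero scalar: if $g = c\,h$ with $c \ne 0$, then $g$ lies in this span if and only if $h$ does. Applying this with $h = \bp(|F|^{p-1}\sgn F)$, the hypothesis that the extremal functional's kernel is a linear combination of $k_1, \ldots, k_N$ is equivalent to the statement that $\bp(|F|^{p-1}\sgn F)$ is itself a linear combination of $k_1, \ldots, k_N$. Finally, Theorem \ref{min_int1} asserts precisely that $\bp(|F|^{p-1}\sgn F)$ is such a linear combination if and only if $F$ solves the minimal interpolation problem $\|F\|_{A^p} = \inf\{\|f\|_{A^p} : \phi_1(f) = \phi_1(F), \ldots, \phi_N(f) = \phi_N(F)\}$. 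Composing these two equivalences yields the theorem.

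I do not expect a genuine obstacle; the proof is a short composition of established results. The only step requiring a little care is the scalar bookkeeping in the second paragraph—one must confirm that the positive constant relating the extremal kernel to $\bp(|F|^{p-1}\sgn F)$ does not affect the ``linear combination'' condition, which it does not, precisely because scaling by a nonzero constant preserves the span of the $k_j$. Beyond that, the argument is forced by the statements of Theorems \ref{lin_ext_proj} and \ref{min_int1}.
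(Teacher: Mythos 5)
Your proposal is correct and follows exactly the paper's own route: the paper likewise derives the theorem by chaining Theorems \ref{lin_ext_proj} and \ref{min_int1} through the object $\bp(|F|^{p-1}\sgn F)$, noting that the extremal functional is unique only up to a positive scalar multiple, which does not affect membership in the span of the $k_j$. You have merely written out in full the scalar bookkeeping that the paper compresses into two sentences.
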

This follows from Theorems \ref{lin_ext_proj} and \ref{min_int1}.
Recall that although there is no unique functional for which 
$F$ is the extremal function, such a functional is unique up to a positive 
scalar multiple, which does not affect whether its kernel
is a linear combination of the $k_j$. 

One direction of this theorem,  
the fact that if $F$ is the extremal 
function for some kernel which is a linear combination of the $k_j$, then 
$F$ solves the stated minimal interpolation problem, is easy to prove 
directly. The proof is as follows. 
Let $F$ be the extremal function for the functional $\phi$, 
which we assume to have kernel $k = \sum_{j=1}^N a_j k_j.$ 
Then 
$$ \|F\|_{A^p} = \inf\{\|f\|_{A^p} : \phi(f) = \phi(F)\}.$$
But if some function $G$ in $A^p$ satisfies  
$\phi_j(F) = \phi_j(G)$ for all $j$ with $1\le j \le N$, 
then $\phi(G) = \phi(F),$ which implies that 
$\|F\|_{A^p} \le \|G\|_{A^p}$.  This implies that $F$ satisfies 
$$
\|F\|_{A^p} = \inf \{\|f\|_{A^p} : \phi_1(f) = \phi_1(F), \ldots, 
\phi_N(f) = \phi_N(F) \}.
$$

\section{Calculating Bergman Projections}\label{calc_bp}

Now that we have explored the relation between the Bergman projection and 
solutions to extremal problems, we will calculate the Bergman projection 
in various cases.

\begin{proposition}\label{bp_monomial}
Let $m$ and $n$ be nonnegative integers.  Then 
\[
\bp(z^m \conj{z}^n) = \begin{cases}
\frac{m-n+1}{m+1}z^{m-n}, &\text{ if $m \ge n$,}\\
0, &\text{ if $m < n$}. \end{cases}
\]
\end{proposition}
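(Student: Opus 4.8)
The plan is to compute the Bergman projection directly from its integral definition applied to the monomial $z^m\conj{z}^n$, exploiting the power series expansion of the Bergman kernel and the orthogonality of monomials under the area measure $\sigma$. Recall that $\bp(f)(z) = \int_{\D} f(\zeta)(1-\conj{\zeta}z)^{-2}\,d\sigma(\zeta)$, so I must evaluate
\[
\bp(z^m\conj{z}^n)(w) = \int_{\D} \zeta^m\conj{\zeta}^n \frac{1}{(1-\conj{\zeta}w)^2}\,d\sigma(\zeta).
\]
First I would expand the kernel as $(1-\conj{\zeta}w)^{-2} = \sum_{j=0}^{\infty} (j+1)\conj{\zeta}^{j}w^{j}$, valid for $|\zeta|<1$ and $|w|<1$, and justify interchanging the sum and integral by absolute convergence on compact subsets of $\D$.

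The key computation is then reducing everything to integrals of monomials. Substituting the series, the integrand becomes $\sum_{j=0}^\infty (j+1)w^j\,\zeta^m\conj{\zeta}^{n+j}$, and I would use the standard orthogonality relation
\[
\int_{\D} \zeta^a \conj{\zeta}^b\,d\sigma(\zeta) = \begin{cases} \frac{1}{a+1}, & a=b,\\ 0, & a\ne b,\end{cases}
\]
which follows by writing $\zeta = re^{i\theta}$, integrating $e^{i(a-b)\theta}$ over $[0,2\pi]$ to get $0$ unless $a=b$, and then evaluating the radial integral $\int_0^1 r^{2a+1}\,dr \cdot 2 = \frac{1}{a+1}$ against the normalized measure. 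In our sum the exponents match when $m = n+j$, i.e.\ $j = m-n$. This immediately forces the projection to vanish when $m<n$, since no nonnegative index $j$ can satisfy the matching condition; this accounts for the second case.

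When $m\ge n$, only the single term $j=m-n$ survives, contributing $(m-n+1)w^{m-n}\cdot\frac{1}{m+1}$, since the matched exponent is $a=b=m$ and $\frac{1}{a+1}=\frac{1}{m+1}$. This yields exactly $\frac{m-n+1}{m+1}w^{m-n}$, which is the claimed formula after renaming $w$ to $z$. I do not anticipate a serious obstacle here; the only point requiring a little care is the rigorous justification of interchanging summation and integration, which I would handle by noting that for fixed $|w|<1$ the series converges uniformly on $\overline{\D}$ in the $\zeta$ variable (or by dominated convergence using the geometric bound on the tail), so term-by-term integration is legitimate. Everything else is the routine orthogonality calculation.
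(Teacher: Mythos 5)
Your proof is correct and complete: the kernel expansion $(1-\conj{\zeta}w)^{-2}=\sum_{j\ge 0}(j+1)\conj{\zeta}^{\,j}w^{j}$, the monomial orthogonality relation, and the uniform convergence (in $\zeta\in\overline{\D}$, for fixed $|w|<1$) justifying term-by-term integration together handle both the $m\ge n$ and $m<n$ cases exactly as claimed. The paper gives no proof of its own, deferring to Lemma 6 in Chapter 2 of \cite{D_Ap}, and your argument is essentially the standard computation found there (equivalently phrased via the orthonormal basis $\sqrt{k+1}\,z^{k}$ of $A^2$), so there is nothing to add.
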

This is Lemma 6 in Chapter 2 of \cite{D_Ap}.

The next theorem is very helpful in calculating the Bergman projection of 
the kernel of a derivative-evaluation functional times the conjugate of 
an $A^p$ function.
\begin{theorem}\label{bp_f_conjg}
Let $1<q_1,q_2\le\infty$.  Let $p_1$ and $p_2$ be the conjugate exponents of 
$q_1$ and $q_2$.  Let $$\frac{1}{q} = \frac{1}{q_1}  + \frac{1}{q_2}$$ and 
suppose that $1<q<\infty.$  Let $p$ be the conjugate exponent of $q$. 
Suppose that $k \in A^{q_1}$ and that $g \in A^{q_2}.$  Let 
the functional 
$\psi$ be defined by $\psi(f) = \int_{\D} f \conj{k}\, d\sigma$ for all 
$f \in A^{p_1}.$  
Then
$\bp(k\conj{g})$ is the kernel of the functional 
$\phi \in (A^{p})^*$ defined by  
$$ \phi(f) = \psi(fg), \qquad f\in A^p.$$
\end{theorem}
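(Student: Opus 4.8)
The plan is to unwind the definition of ``kernel'': to say that $\bp(k\conj{g})$ is the kernel of $\phi$ means precisely that $\bp(k\conj{g})$ lies in $A^q$ and that $\phi(f) = \int_{\D} f\,\conj{\bp(k\conj{g})}\,d\sigma$ for every $f \in A^p$. So the proof splits into a bookkeeping part, verifying that all the relevant products and projections live in the right spaces, and a one-line computation invoking Theorem \ref{bp_equality}.

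First I would check the exponents. Since $k \in A^{q_1} \subset L^{q_1}$ and $\conj{g} \in L^{q_2}$, H\"{o}lder's inequality with $1/q = 1/q_1 + 1/q_2$ gives $k\conj{g} \in L^q$; because $1 < q < \infty$, the boundedness of the Bergman projection yields $\bp(k\conj{g}) \in A^q$, which is the space the kernel is supposed to inhabit. I would likewise confirm that $\phi$ makes sense on $A^p$: for $f \in A^p$ and $g \in A^{q_2}$, a second H\"{o}lder computation shows $fg \in L^{p_1}$---indeed $\frac{1}{p} + \frac{1}{q_2} = \bigl(1 - \frac{1}{q_1} - \frac{1}{q_2}\bigr) + \frac{1}{q_2} = \frac{1}{p_1}$---and since $f$ and $g$ are analytic so is $fg$, whence $fg \in A^{p_1}$ and $\psi(fg)$ is defined.

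The heart of the argument is then immediate. Writing out the definitions, $\phi(f) = \psi(fg) = \int_{\D} (fg)\conj{k}\,d\sigma = \int_{\D} f\,\conj{k\conj{g}}\,d\sigma$, using the identity $g\conj{k} = \conj{k\conj{g}}$. Now I apply Theorem \ref{bp_equality} with $f \in A^p$ and $k\conj{g} \in L^q$ in the roles of its hypotheses (legitimate since $1/p + 1/q = 1$), obtaining $\int_{\D} f\,\conj{k\conj{g}}\,d\sigma = \int_{\D} f\,\conj{\bp(k\conj{g})}\,d\sigma$. Combining these, $\phi(f) = \int_{\D} f\,\conj{\bp(k\conj{g})}\,d\sigma$ for all $f \in A^p$, which is exactly the assertion that $\bp(k\conj{g})$ is the kernel of $\phi$; boundedness of $\phi$ as an element of $(A^p)^*$ then follows from \eqref{A_q_isomorphism}, since it is represented by a function in $A^q$.

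I do not expect a genuine obstacle here: the substantive content is that $\bp$ is $L^q$-bounded, so that the projection lands in $A^q$, and that Theorem \ref{bp_equality} allows one to insert $\bp$ inside the pairing. The only thing requiring care is the exponent arithmetic---making sure simultaneously that $k\conj{g} \in L^q$ and $fg \in A^{p_1}$, and that $q$ really is the exponent conjugate to $p$ so that Theorem \ref{bp_equality} is applicable---together with keeping the conjugations straight when moving $g$ across the bar.
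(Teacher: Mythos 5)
Your proposal is correct and follows essentially the same route as the paper's proof: verify the exponent arithmetic so that $fg \in A^{p_1}$ and $k\conj{g} \in L^q$, write $\phi(f) = \int_{\D} fg\,\conj{k}\,d\sigma = \int_{\D} f\,\conj{k\conj{g}}\,d\sigma$, and invoke Theorem \ref{bp_equality} to replace $k\conj{g}$ by $\bp(k\conj{g})$ inside the pairing. The paper's version is terser (it records only $1/p + 1/q_1 + 1/q_2 = 1$), but your extra bookkeeping---checking $k\conj{g} \in L^q$ via H\"{o}lder and citing \eqref{A_q_isomorphism} for boundedness of $\phi$---is the same argument spelled out in full.
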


\begin{proof}
First note that $1/p + 1/q_1 + 1/q_2 = 1$, so if $f \in A^p,$ 
then $fg \in A^{p_1}$ and the definition of $\phi$ makes sense. 
Now observe that 
$$ \phi(f) = \psi(fg) = \int_{\D} fg \conj{k}\, d \sigma.$$
By Theorem \ref{bp_equality}, this equals
\[
 \int_{\D} f \conj{\bp\left(\conj{g} k \right)}\, d\sigma. 
\qedhere
\]
\end{proof}

We will study the kernels of various derivative-evaluation functionals.  
Evaluation at the origin is somewhat different 
and simpler than evaluation elsewhere, so we deal with it 
first.

\begin{theorem}\label{kernel_derivative_0}
The kernel for the functional $f \mapsto f^{(n)}(0)$ is $(n+1)! z^n.$  If 
$g \in A^p$ then 
\[ \bp(z^n \conj{g(z)}) = \sum_{j=0}^n \conj{\frac{g^{n-j}(0)}{(n-j)!}}
\frac{j+1}{n+1} z^{n}.
\]
\end{theorem}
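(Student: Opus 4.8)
The plan is to prove Theorem~\ref{kernel_derivative_0} in two stages, treating the two claims separately. For the first claim, I would verify directly that $(n+1)!z^n$ represents the functional $f\mapsto f^{(n)}(0)$ by testing it against the monomials $f(z)=z^m$, which span a dense subspace of $A^p$. Since $\int_\D z^m\conj{z^n}\,d\sigma$ equals $\frac{1}{n+1}$ when $m=n$ and $0$ otherwise (this is the $g=1$ case of Proposition~\ref{bp_monomial}, or a direct computation in polar coordinates), multiplying by the constant $(n+1)!$ gives $\int_\D z^m\,\conj{(n+1)!\,z^n}\,d\sigma = (n+1)!\cdot\frac{\delta_{mn}}{n+1} = n!\,\delta_{mn}$, which is exactly $\frac{d^n}{dz^n}z^m\big|_{z=0}$. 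By linearity and density this identifies the kernel, completing the first claim.

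For the second claim I would compute $\bp(z^n\conj{g})$ by expanding $g$ in its Taylor series $g(z)=\sum_{k=0}^\infty \frac{g^{(k)}(0)}{k!}z^k$, so that $\conj{g(z)}=\sum_{k=0}^\infty \conj{\frac{g^{(k)}(0)}{k!}}\,\conj{z}^k$ and hence
\[
z^n\conj{g(z)} = \sum_{k=0}^\infty \conj{\frac{g^{(k)}(0)}{k!}}\, z^n\conj{z}^k.
\]
Applying the Bergman projection term by term and invoking Proposition~\ref{bp_monomial} with $m=n$, each term $\bp(z^n\conj{z}^k)$ vanishes when $k>n$ and equals $\frac{n-k+1}{n+1}z^{n-k}$ when $k\le n$. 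Collecting the surviving terms and reindexing with $j=n-k$ (so $k=n-j$ runs from $0$ to $n$) yields
\[
\bp(z^n\conj{g(z)}) = \sum_{j=0}^n \conj{\frac{g^{(n-j)}(0)}{(n-j)!}}\,\frac{j+1}{n+1}\,z^{n-j},
\]
which matches the stated formula (the exponent $z^n$ in the paper's display appears to be a typo for $z^{n-j}$).

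The main obstacle, and the only step requiring genuine care, is justifying the interchange of the Bergman projection with the infinite Taylor sum. Since $\bp$ is only an integral operator against the kernel $(1-\conj\zeta z)^{-2}$, I would justify the term-by-term application by working on compact subdisks: the Taylor series of $g$ converges uniformly on each $\overline{\D_r}$ with $r<1$, and since $g\in A^p\subset A^1$ the tail contributions to the integral defining $(\bp f)(z)$ can be controlled, letting $r\to 1$. Alternatively, and more cleanly, I would note that $\bp$ is continuous from $L^1$ (or from $A^p$, via its boundedness on $L^p$) into the space of analytic functions with the topology of uniform convergence on compacta, so that convergence of the partial sums of $z^n\conj g$ in the appropriate norm transfers to convergence of their projections; the finitely many nonzero terms then give the closed-form answer. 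Everything downstream is the routine algebra of reindexing already carried out above.
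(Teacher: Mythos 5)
Your treatment of the first claim matches the paper's (power-series verification of $\int_{\D} f\,\conj{z}^n\,d\sigma$). For the second claim you take a genuinely different route: the paper never expands $g$ in a Taylor series, but instead notes, via Theorem \ref{bp_f_conjg} together with the first claim, that $\bp(z^n\conj{g})$ is the kernel of the functional $f \mapsto \frac{1}{(n+1)!}(fg)^{(n)}(0)$, expands this by the Leibniz rule as $\frac{1}{(n+1)!}\sum_{j=0}^n \binom{n}{j}f^{(j)}(0)g^{(n-j)}(0)$, and reads off the kernel as the corresponding combination of the kernels $(j+1)!z^j$. Your term-by-term projection via Proposition \ref{bp_monomial} is a legitimate alternative --- the paper itself remarks, apropos of Theorem \ref{poly_conj_proj}, that such results can also be proved ``by using Taylor series and Proposition \ref{bp_monomial}'' --- and your justification of the interchange (partial Taylor sums converge to $g$ in $A^p$, multiplication by the bounded factor $z^n$ preserves $L^p$ convergence, and $\bp$ is bounded on $L^p$) is precisely the device the paper uses in proving Proposition \ref{kernel_for_integration}. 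The paper's route avoids the convergence issue entirely; yours avoids invoking Theorem \ref{bp_f_conjg}.

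However, your final formula is wrong: you applied the reindexing $j=n-k$ to the coefficient but not to the exponent. Before reindexing you correctly have $\sum_{k=0}^n \conj{\frac{g^{(k)}(0)}{k!}}\,\frac{n-k+1}{n+1}\,z^{n-k}$; substituting $k=n-j$ turns $z^{n-k}$ into $z^{j}$, not $z^{n-j}$ (your $z^{n-j}$ is $z^{k}$ in the old index, which pairs the wrong power with each coefficient). The correct display is
\[
\bp(z^n\conj{g(z)}) = \sum_{j=0}^n \conj{\frac{g^{(n-j)}(0)}{(n-j)!}}\,\frac{j+1}{n+1}\,z^{j},
\]
and consequently your diagnosis of the misprint in the statement is also off: the $z^n$ there is a typo for $z^{j}$ (this is exactly what the paper's own proof produces), not for $z^{n-j}$. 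A one-line check: for $n=1$ and $g(z)=z$, Proposition \ref{bp_monomial} gives $\bp(z\conj{z})=\tfrac{1}{2}$, which agrees with the $z^{j}$ formula but not with your version, which would give $z/2$. With the exponent corrected, your argument is complete.
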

\begin{proof}
The first statement can be verified by evaluating 
\[ \int_{\D} f(z) \conj{z}^n d\sigma(z) \]
when $f$ is written as a power series.  The second part follows from 
Proposition \ref{bp_monomial}.  To see this, note that by the first part 
of the theorem,
$\bp(z^n \conj{g(z)})$ is the kernel for the functional taking $f \in A^p$ 
to 
\[
\frac{1}{(n+1)!}(fg)^{(n)}(0) = 
\frac{1}{(n+1)!} \sum_{j=0}^n \binom{n}{j} f^{(j)}(0) g^{(n-j)}(0),
\]
which has kernel 
\[\sum_{j=0}^n \conj{\frac{g^{n-j}(0)}{(n-j)!}}
\frac{j+1}{n+1} z^{j}. \qedhere
\]
\end{proof}

We will now deal with the function $1/(1-\conj{a}z)^n$, for $n \ge 2$.
\begin{proposition}\label{kernel_f_(n)}
The kernel for the functional $f \mapsto f^{(n)}(a)$ is 
$$
\qquad \qquad \qquad \qquad \frac{(n+1)!z^{n}}{(1-\conj{a}z)^{n+2}}, \qquad \qquad n=0,1,2,\ldots
$$
\end{proposition}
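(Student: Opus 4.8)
The plan is to produce the kernel directly from the reproducing formula \eqref{eq_intro_reproducing_formula}, which the introduction notes is valid for every $f \in A^1$ and hence for every $f \in A^p$ with $1 < p < \infty$. Recall that the kernel of a functional $\phi$ is the function $k \in A^q$ for which $\phi(f) = \int_{\D} f \conj{k}\, d\sigma$. So it suffices to show that
\[
f^{(n)}(a) = \int_{\D} f(z)\, \conj{\left(\frac{(n+1)!\, z^n}{(1-\conj{a} z)^{n+2}}\right)}\, d\sigma(z)
\]
for all $f \in A^p$, and that the function $z \mapsto (n+1)!\, z^n (1-\conj{a} z)^{-(n+2)}$ lies in $A^q$.

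First I would fix $a \in \D$ and differentiate the reproducing formula
\[
f(z) = \int_{\D} \frac{f(\zeta)}{(1-\conj{\zeta} z)^2}\, d\sigma(\zeta)
\]
$n$ times with respect to $z$. The elementary identity $\frac{d^n}{dw^n}(1-w)^{-2} = (n+1)!\,(1-w)^{-(n+2)}$, together with the chain rule applied to $w = \conj{\zeta} z$, gives
\[
\frac{\partial^n}{\partial z^n}\frac{1}{(1-\conj{\zeta} z)^2} = \frac{(n+1)!\,\conj{\zeta}^{\,n}}{(1-\conj{\zeta} z)^{n+2}}.
\]
Carrying the $n$ derivatives inside the integral and then setting $z = a$ would yield
\[
f^{(n)}(a) = \int_{\D} f(\zeta)\,\frac{(n+1)!\,\conj{\zeta}^{\,n}}{(1-\conj{\zeta} a)^{n+2}}\, d\sigma(\zeta),
\]
and since $\frac{(n+1)!\,\conj{\zeta}^{\,n}}{(1-\conj{\zeta} a)^{n+2}} = \conj{\left(\frac{(n+1)!\,\zeta^n}{(1-\conj{a}\zeta)^{n+2}}\right)}$, this exhibits the claimed kernel.

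The one point requiring care --- and the main obstacle --- is justifying the interchange of differentiation and integration. Here I would use that for fixed $\zeta \in \D$ the integrand is holomorphic in $z$, and that on a closed disc $\conj{D(a,\epsilon)} \subset \D$ we have $|1 - \conj{\zeta} z| \ge 1 - |z| \ge \delta > 0$ uniformly in $\zeta \in \D$. Consequently every $z$-derivative of the kernel is bounded on this region uniformly in $\zeta$, and since $f \in A^1$ is integrable, the dominated convergence theorem applied to the difference quotients (inductively in the order of the derivative) legitimizes differentiating under the integral sign.

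Finally I would check the membership $k \in A^q$: because $a \in \D$, the denominator satisfies $|1 - \conj{a} z| \ge 1 - |a| > 0$ throughout $\D$, so $k(z) = (n+1)!\, z^n (1-\conj{a} z)^{-(n+2)}$ is bounded and analytic on $\D$, and therefore lies in $A^q$ for every $q$. This also reflects the fact that derivative evaluation at an interior point is a bounded functional on $A^p$. As sanity checks, the case $n = 0$ recovers the point-evaluation kernel $(1-\conj{a} z)^{-2}$, and setting $a = 0$ reduces to the kernel $(n+1)!\, z^n$ of Theorem \ref{kernel_derivative_0}.
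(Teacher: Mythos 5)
Your proposal is correct and matches the paper's proof, which likewise obtains the kernel by differentiating the reproducing formula $n$ times with respect to the evaluation point (the paper writes it as differentiation in $a$ of $f(a) = \int_{\D} (1-a\conj{z})^{-2} f(z)\, d\sigma(z)$, which is the same computation up to renaming variables). The only difference is that you spell out the justification for differentiating under the integral sign and the membership $k \in A^q$, details the paper leaves implicit.
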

\begin{proof}
We know that 
$$f(a) = \int_{\D} \frac{1}{(1-a\conj{z})^2}f(z)\, d\sigma(z).$$
Differentiation $n$ times with respect to 
$a$ gives the result.  
\end{proof}

\begin{proposition}\label{(1-az)n+2_kernel}
For each $a \in \D$ with $a \ne 0$, there are numbers $c_0, \ldots, c_n$ with $c_n \ne 0$ such 
that 
the function $$\frac{1}{(1-\conj{a}z)^{n+2}}$$ is the kernel for the 
functional $f \mapsto c_0 f(a)+ c_1 f'(a) + \ldots + 
c_n f^{(n)}(a).$   
\end{proposition}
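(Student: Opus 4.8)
The plan is to reduce the claim to a statement about expressing a constant polynomial in a convenient basis. By Proposition~\ref{kernel_f_(n)}, the functional $f \mapsto f^{(j)}(a)$ has kernel $k_j(z) = (j+1)!\, z^j/(1-\conj{a}z)^{j+2}$. Since the correspondence between a functional and its representing kernel is conjugate-linear, the functional $f \mapsto \sum_{j=0}^n c_j f^{(j)}(a)$ has kernel $\sum_{j=0}^n \conj{c_j}\, k_j$. Thus it suffices to produce constants $b_0, \ldots, b_n$ with $b_n \ne 0$ such that $\sum_{j=0}^n b_j k_j = (1-\conj{a}z)^{-(n+2)}$, and then set $c_j = \conj{b_j}$; note that $c_n \ne 0$ if and only if $b_n \ne 0$, so the conjugation does not affect the nonvanishing condition.

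Next I would clear denominators. Multiplying the desired identity $\sum_{j=0}^n b_j (j+1)!\, z^j (1-\conj{a}z)^{-(j+2)} = (1-\conj{a}z)^{-(n+2)}$ through by $(1-\conj{a}z)^{n+2}$ turns it into the polynomial identity
\[
\sum_{j=0}^n b_j (j+1)!\, z^j (1-\conj{a}z)^{n-j} = 1.
\]
Writing $e_j = b_j (j+1)!$, the task becomes the following: express the constant polynomial $1$ as a linear combination $\sum_{j=0}^n e_j q_j$ of the polynomials $q_j(z) = z^j(1-\conj{a}z)^{n-j}$, $0 \le j \le n$, with the top coefficient $e_n$ nonzero.

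The key observation is that $\{q_0, \ldots, q_n\}$ is a basis for the space of polynomials of degree at most $n$. Each $q_j$ has $z$-adic valuation exactly $j$, that is, $q_j(z) = z^j + (\text{higher-order terms})$, so the matrix expressing the $q_j$ in the monomial basis $\{1, z, \ldots, z^n\}$ is triangular with ones on the diagonal, hence invertible. Since this space has dimension $n+1$ and there are $n+1$ polynomials $q_j$, they form a basis, and there exist unique coefficients $e_0, \ldots, e_n$ with $\sum_{j=0}^n e_j q_j = 1$. Setting $b_j = e_j/(j+1)!$ and $c_j = \conj{b_j}$ then yields the desired functional.

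It remains to verify that $e_n \ne 0$, and this is the one point that genuinely requires a separate argument. If $e_n$ were $0$, then $1 = \sum_{j=0}^{n-1} e_j z^j (1-\conj{a}z)^{n-j}$, and every summand on the right is divisible by $(1-\conj{a}z)$, since $n - j \ge 1$ for $j \le n-1$. Evaluating at $z = 1/\conj{a}$, which is legitimate because $a \ne 0$, would force $1 = 0$, a contradiction. Hence $e_n \ne 0$, so $c_n \ne 0$, completing the argument. I do not expect a serious obstacle beyond this nonvanishing check; the only delicate points are the bookkeeping of the conjugate-linear kernel correspondence and confirming that the top coefficient does not degenerate.
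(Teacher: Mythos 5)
Your proof is correct, but it takes a genuinely different route from the paper. The paper argues by induction on $n$: it expands $z^n/(1-\conj{a}z)^{n+2}$ in partial fractions as $\sum_{j=0}^{n+2} b_j (1-\conj{a}z)^{-j}$, shows $b_0=b_1=0$ by differentiating the cleared identity $n+1$ times, shows $b_{n+2}\ne 0$ by a pole-order argument, and then solves for $(1-\conj{a}z)^{-(n+2)}$ in terms of the kernel of $f\mapsto f^{(n)}(a)$ and the lower powers $(1-\conj{a}z)^{-j}$, $2\le j\le n+1$, which the induction hypothesis identifies as kernels of combinations of $f(a),\ldots,f^{(n-1)}(a)$. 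You instead do everything in one shot: after clearing the denominator, you observe that the polynomials $q_j(z)=z^j(1-\conj{a}z)^{n-j}$ form a basis of the degree-$\le n$ polynomials (triangular with unit diagonal against the monomial basis), so the constant $1$ has a unique expansion $\sum_j e_j q_j$, and $e_n\ne 0$ follows by evaluating at $z=1/\conj{a}$, which kills every term with $j\le n-1$. Your version is shorter, non-inductive, gives uniqueness of the coefficients for free, and your nonvanishing check is more transparent than the paper's differentiation-plus-pole argument; you also handle the conjugate-linearity of the kernel correspondence carefully, which the paper leaves implicit (and which is harmless here since the $c_j$ are arbitrary constants). What the paper's induction buys instead is the stronger triangular statement along the way -- each $(1-\conj{a}z)^{-(j+2)}$, $j\le n$, is the kernel of a combination of $f(a),\ldots,f^{(j)}(a)$ -- which is reused later (e.g.\ in Proposition~\ref{general_proj_zero}, where $z^k/(1-\conj{a}z)^{k+2}$ is re-expanded into pure powers of $(1-\conj{a}z)^{-1}$); note, though, that this stronger statement also drops out of your method applied with $n$ replaced by $j$, so nothing downstream is lost.
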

\begin{proof}
We will proceed by induction.  The claim is true for $n=0$ by the 
reproducing property of the Bergman kernel function.  
For general $n$, we may write the partial fraction expansion 
$$ \frac{z^n}{(1-\conj{a}z)^{n+2}} = \sum_{j=0}^{n+2} \frac{b_j}{(1-\conj{a}z)^j},
$$
for some complex numbers $b_j$.  
Thus, 
$$
z^n = \sum_{j=0}^{n+2} b_j (1-\conj{a}z)^{n+2-j}.$$
Differentiating both sides $n+1$ times with respect to $z$ gives
$$0 = b_1 (-\conj{a})^{n+1}(n+1)! + b_0(-\conj{a})^{n+1}(n+2)!
(1-\conj{a}z).$$
Since this holds for all $z$, it follows that $b_0 = b_1 = 0.$ 
Since $ z^n/(1-\conj{a}z)^{n+2}$ has a pole of order $n+2$ at 
$1/\conj{a}$, we see that $b_{n+2} \ne 0.$ 
Therefore,
$$\frac{1}{(1-\conj{a}z)^{n+2}} = \frac{1}{b_{n+2}}\left(\frac{z^n}{(1-\conj{a}z)^{n+2}} - 
\sum_{j=2}^{n+1} \frac{b_j}{(1-\conj{a}z)^j}\right).$$
Note that the first term of the right side of the above equation is 
the kernel for the functional $f \mapsto (1/(n+1)!)f^{(n)}(a)$.  
Also, each term in the sum 
$\sum_{j=2}^{n+1} \frac{b_j}{(1-\conj{a}z)^j}$
is the kernel for a linear functional taking 
each function $f$ to some linear combination of $f(a), f'(a), \ldots,$ and 
$f^{(n-1)}(a),$ by the induction hypothesis.  This proves the proposition. 
\end{proof}

\begin{proposition}\label{general_proj_zero}
Let $g \in A^p$ for $1<p<\infty,$ and let $a \in \D$ with 
$a \ne 0$. Suppose 
$g$ has a zero of order $n$ at $a$. 
Let $N \ge 0$ be an integer. Then 
$$\bp\left(\frac{1}{(1-\conj{a}z)^{N+2}} \conj{g(z)}\right) = 
\sum_{k=0}^{N-n}
C_k \frac{1}{(1-\conj{a}z)^{k+2}}$$
for some complex constants $C_k$ depending on $g^{(m)}(a)$ for 
$0 \le m \le N$.
\end{proposition}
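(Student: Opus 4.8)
The plan is to reduce everything to a statement about derivative-evaluation functionals by combining Theorem \ref{bp_f_conjg} with the two descriptions of the functions $1/(1-\conj{a}z)^{m+2}$ already obtained. Since $a \ne 0$, Proposition \ref{(1-az)n+2_kernel} applies and writes $1/(1-\conj{a}z)^{N+2}$ as the kernel of a functional $\psi(f) = \sum_{j=0}^{N} c_j f^{(j)}(a)$. This kernel is a bounded analytic function on $\D$, so it lies in $A^{q_1}$ with $q_1 = \infty$; taking $q_2 = p$ (so $g \in A^{q_2}$), the hypotheses of Theorem \ref{bp_f_conjg} are met. That theorem then identifies $\bp\big(\conj{g(z)}/(1-\conj{a}z)^{N+2}\big)$ as the (unique) kernel representing the functional
\[
\phi(f) = \psi(fg) = \sum_{j=0}^{N} c_j\,(fg)^{(j)}(a)
\]
on the appropriate Bergman space.

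Next I would expand each term by the Leibniz rule,
\[
(fg)^{(j)}(a) = \sum_{i=0}^{j} \binom{j}{i} f^{(i)}(a)\, g^{(j-i)}(a),
\]
and use the hypothesis that $g$ has a zero of order $n$ at $a$, so $g^{(m)}(a) = 0$ for $m < n$. The only surviving terms are those with $j - i \ge n$, i.e.\ $i \le j - n \le N - n$. Collecting the coefficient of each $f^{(i)}(a)$ shows that
\[
\phi(f) = \sum_{i=0}^{N-n} d_i\, f^{(i)}(a),
\]
where each $d_i$ is a combination of the $c_j$ and the derivatives $g^{(m)}(a)$ with $n \le m \le N$, hence depends only on $g^{(m)}(a)$ for $0 \le m \le N$. (When $N < n$ every term vanishes and $\phi = 0$, matching the empty sum in the statement.)

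It remains to pass from this expression for $\phi$ back to a kernel of the advertised form, and this is where the real content lies. Proposition \ref{(1-az)n+2_kernel} tells us that for each $k$ with $0 \le k \le N-n$ the function $1/(1-\conj{a}z)^{k+2}$ is the kernel of a functional $\Lambda_k(f) = \sum_{m=0}^{k} c^{(k)}_m f^{(m)}(a)$ with top coefficient $c^{(k)}_k \ne 0$. Because of this triangular structure with nonvanishing diagonal, $\Lambda_0, \ldots, \Lambda_{N-n}$ are linearly independent and span exactly the same space as the functionals $f \mapsto f^{(i)}(a)$, $0 \le i \le N-n$. Since $\phi$ lies in that span, I can write $\phi = \sum_{k=0}^{N-n} \gamma_k \Lambda_k$ for suitable constants $\gamma_k$. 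Finally, the correspondence between a functional and its representing kernel is conjugate-linear and, for $1<p<\infty$, one-to-one; so the unique kernel of $\phi$ — which by the first paragraph is $\bp\big(\conj{g(z)}/(1-\conj{a}z)^{N+2}\big)$ — must equal $\sum_{k=0}^{N-n} \conj{\gamma_k}/(1-\conj{a}z)^{k+2}$, giving the claimed form with $C_k = \conj{\gamma_k}$ (which, being determined by the $d_i$, depend only on $g^{(m)}(a)$ for $0 \le m \le N$).

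The main obstacle I anticipate is the bookkeeping in this last change of basis: one must verify that the $\Lambda_k$ genuinely span the full space of derivative evaluations up to order $N-n$ (this is exactly where $c^{(k)}_k \ne 0$ from Proposition \ref{(1-az)n+2_kernel} is used) and that the conjugate-linearity of the kernel map is accounted for. Neither point affects the form of the answer, only the precise values of the constants $C_k$.
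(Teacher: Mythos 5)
Your proof is correct, and its first half coincides with the paper's: the paper likewise combines Proposition \ref{(1-az)n+2_kernel} with Theorem \ref{bp_f_conjg} to identify $\bp\bigl(\conj{g(z)}/(1-\conj{a}z)^{N+2}\bigr)$ as the kernel of $f \mapsto \sum_{j=0}^N b_j (fg)^{(j)}(a)$, and then applies the Leibniz rule together with the order-$n$ vanishing of $g$ at $a$ to reduce this to $\sum_{k=0}^{N-n} B_k f^{(k)}(a)$. Where you genuinely diverge is the final conversion back to a kernel. The paper proceeds concretely: it writes the kernel of $\sum_k B_k f^{(k)}(a)$ explicitly as $\sum_k B_k (k+1)!\,z^k/(1-\conj{a}z)^{k+2}$ via Proposition \ref{kernel_f_(n)}, and then re-runs the partial-fraction computation from the proof of Proposition \ref{(1-az)n+2_kernel} to expand each $z^k/(1-\conj{a}z)^{k+2}$ in the pure powers $1/(1-\conj{a}z)^j$, $2 \le j \le k+2$. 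You instead invert Proposition \ref{(1-az)n+2_kernel} abstractly: the functionals $\Lambda_k$ it supplies form a triangular system in the derivative evaluations with nonzero diagonal entries $c^{(k)}_k$, hence span the same space as $f \mapsto f^{(i)}(a)$ for $0 \le i \le N-n$, so $\phi = \sum_k \gamma_k \Lambda_k$, and uniqueness of the representing kernel (together with the conjugate-linearity of the kernel correspondence, which you correctly track) yields the stated form. Your finish avoids repeating the partial-fraction argument and makes it structurally clear why the exponents are capped at $N-n+2$, at the cost of invoking uniqueness of kernels and a change-of-basis argument; the paper's finish is more computational but produces the constants $C_k$ by an explicit formula. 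You also handle the degenerate case $N < n$ explicitly (where $\phi = 0$ and the sum is empty), which the paper leaves implicit. Both arguments are sound and give constants depending only on $g^{(m)}(a)$ for $0 \le m \le N$.
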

\begin{proof}
The projection 
$$\bp\left( \frac{1}{(1-\conj{a}z)^{N+2}}\, \conj{g(z)} \right)$$
is the kernel associated with the functional 
$$f \mapsto \sum_{j=0}^N b_j (fg)^{(j)}(a)$$
for some constants $b_j$, with $b_N \ne 0$, by the previous proposition
and Theorem \ref{bp_f_conjg}. 
But 
\[
\sum_{j=0}^N b_j (fg)^{(j)}(a) = 
\sum_{j=0}^N \sum_{k=0}^j b_j \binom{j}{k} f^{(k)}(a) g^{(j-k)}(a).
\]
Since $g^{(j)}(a) = 0$ for $0 \le j < n,$ all terms in the sum with 
$j-k < n$ are $0$.  But this means that the only non-zero terms in the 
sum occur when $k \le j-n$, so that $k \le N-n.$  
Now, set 
\[
B_k = \sum_{j = k + n}^{N} b_j \binom{j}{k} g^{(j-k)}(a),
\]
so
\[
\sum_{j=0}^N b_j (fg)^{(j)}(a) = \sum_{k=0}^{N-n} B_k f^{(k)}(a).
\]
But the kernel associated to $\sum_{k=0}^{N-n} B_k f^{(k)}(a)$ is 
\[
\sum_{k=0}^{N-n} B_k \frac{(k+1)!z^k}{(1-\conj{a}z)^{k+2}}. 
\]
As in the proof of Theorem \ref{(1-az)n+2_kernel}, we may show that 
\[
\frac{z^k}{(1-\conj{a}z)^{k+2}} = \frac{c_{k2}}{(1-\conj{a}z)^2}   
  + \frac{c_{k3}}{(1-\conj{a}z)^3} + \cdots +   
     \frac{c_{k,k+2}}{(1-\conj{a}z)^{k+2}}
\]
for some constants $c_{k2}, \ldots, c_{k,k+2}$.
Thus we may write 
\[
\sum_{k=0}^{N-n} B_k \frac{(k+1)!z^k}{(1-\conj{a}z)^{k+2}} = 
\sum_{k=0}^{N-n} C_k \frac{1}{(1-\conj{a}z)^{k+2}}\]
for some constants $C_k$, depending on $g^{(m)}(a)$ for $0 \le m \le N$. 
\end{proof}

We will now deal with the function $1/(1-\conj{a}z).$ Since the functional 
with kernel
$1/(1-\conj{a}z)^{n+2}$ involves differentiation of order $n$, it seems 
reasonable that the functional with kernel $1/(1-\conj{a}z)$ involves 
integration.  This is indeed the case. 
\begin{proposition}\label{kernel_for_integration}
The function $$1/(1-\conj{a}z)$$ is the kernel for the functional defined 
on $A^p$ for $1<p<\infty$ by 
$$f \mapsto \frac{1}{a} \int_0^a f(z)\, dz.$$
\end{proposition}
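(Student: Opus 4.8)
The plan is to read the claim as an identity between two bounded linear functionals on $A^p$ and then verify it on a dense set. Recall that saying $1/(1-\conj{a}z)$ is the kernel for a functional $\Lambda$ means precisely that $\Lambda(f) = \int_{\D} f\,\conj{k}\,d\sigma$ with $k = 1/(1-\conj{a}z)$. Since $\conj{k} = 1/(1-a\conj{z})$, the statement to prove is
\[
\int_{\D} \frac{f(z)}{1-a\conj{z}}\,d\sigma(z) = \frac{1}{a}\int_0^a f(z)\,dz, \qquad f \in A^p.
\]
So my first move is to set up this identity explicitly, being careful with the conjugation so that the factor involving $a$ (rather than $\conj{a}$) appears on the left.

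Next I would check that both sides define bounded linear functionals on $A^p$. For the left-hand side, since $|a|<1$ we have $|1-\conj{a}z| \ge 1-|a| > 0$ on $\D$, so $k = 1/(1-\conj{a}z)$ is bounded and hence lies in $A^q$; thus the associated functional is bounded on $A^p$ (this is just the dual pairing with an element of $A^q$, via H\"older's inequality). For the right-hand side, the segment $[0,a]$ is a compact subset of $\D$, and point evaluation is bounded on $A^p$ uniformly for $|z| \le |a|$; hence $\bigl|\frac{1}{a}\int_0^a f\,dz\bigr| \le \max_{z\in[0,a]}|f(z)| \le C_{|a|}\|f\|_{A^p}$, so the integration functional is continuous. (Any path from $0$ to $a$ gives the same value since $f$ is analytic, so the straight segment is unambiguous.)

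Because both functionals are bounded and the polynomials are dense in $A^p$, it then suffices to verify the identity for the monomials $f(z)=z^m$, $m \ge 0$. For the left side I would expand the kernel as a geometric series, $\conj{k} = \sum_{n \ge 0} a^n \conj{z}^n$ (uniformly convergent on $\D$ since $|\conj{a}z| \le |a| < 1$), and integrate termwise against $z^m$; by the orthogonality relation $\int_{\D} z^m \conj{z}^n\,d\sigma = \delta_{mn}/(m+1)$, which is exactly the computation underlying Proposition \ref{bp_monomial}, only the $n=m$ term survives, giving $a^m/(m+1)$. For the right side one computes directly
\[
\frac{1}{a}\int_0^a z^m\,dz = \frac{1}{a}\cdot\frac{a^{m+1}}{m+1} = \frac{a^m}{m+1}.
\]
The two agree, which establishes the identity on polynomials and hence, by boundedness and density, on all of $A^p$.

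The computation itself is routine; the only points requiring care are the bookkeeping of the conjugation (so that $a^n$ and not $\conj{a}^n$ appears and matches the endpoint $a$) and the justification that $f \mapsto \frac{1}{a}\int_0^a f\,dz$ is genuinely continuous on $A^p$, which is the one step that is not purely formal. I expect the boundedness of this integration functional to be the main (though minor) obstacle, and the uniform boundedness of point evaluations on compact subsets of $\D$ is the fact that resolves it.
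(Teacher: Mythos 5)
Your proof is correct and takes essentially the same route as the paper: both expand the kernel in the geometric series $\sum_n a^n\conj{z}^n$, use the orthogonality relation $\int_{\D} z^m\conj{z}^n\,d\sigma = \delta_{mn}/(m+1)$ to evaluate the pairing as $a^m/(m+1)$, and match this against $\frac{1}{a}\int_0^a z^m\,dz$. The only cosmetic difference is that you check the identity on monomials and conclude by density plus boundedness of both functionals (making the continuity of $f\mapsto\frac{1}{a}\int_0^a f\,dz$ explicit via uniformly bounded point evaluations on $[0,a]$), whereas the paper works directly with the full Taylor series of $f\in A^p$ and justifies the interchange by the $A^p$-convergence of its partial sums --- the same limiting argument in different packaging.
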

\begin{proof}
Since $$\frac{1}{1-\conj{a}z} = \sum_{n=0}^\infty (\conj{a}z)^n,$$ it 
follows that 
\begin{equation}\label{int_1/(1-conj(a)z)z^m}
\int_{\D}\frac{z^m}{1-a\conj{z}} \, d\sigma = \sum_{n=0}^\infty 
\int_{\D}(a\conj{z})^n z^m \, d\sigma = 
a^m \int_{\D} |z|^{2m}\, d\sigma = 
\frac{a^m}{m+1}.
\end{equation}
The change in the order of integration and summation is justified by the 
fact that the sum converges uniformly in $\overline{\D}.$  
Now let $f \in A^p$ and write 
$f(z) = \sum_{m=0}^\infty b_m z^m.$  
Define $$F(z) = \frac{1}{z} \int_0^z f(\zeta)\, d\zeta 
 = \sum_{m=0}^\infty \frac{b_m}{m+1}z^m.$$
Therefore, by equation \eqref{int_1/(1-conj(a)z)z^m},
$$\int_{\D}\frac{1}{1-a\conj{z}}\, f(z)\, d\sigma = 
\int_{\D}\frac{1}{1-a\conj{z}} \left(\sum_{m=0}^\infty b_m z^m\right) d\sigma
= \sum_{m=0}^\infty b_m \frac{a^m}{m+1} = F(a).$$
The interchange of the order of integration and summation is justified by 
the fact the partial sums of the Taylor series for $f$ approach 
$f$ in $A^p.$  
\end{proof}

The following theorem is quite useful for determining what form certain
Bergman projections have.
\begin{theorem}\label{dpeval_conj_proj}
For $1 \le n \le N$, let $d_n$ be a nonnegative integer and let $z_n \in \D$.
Let $k$ be analytic and a  linear combination of the 
kernels of the functionals 
given by $f \mapsto f^{(d_n)}(z_n)$.
Let $g \in A^p$ for $p > 1.$  
Then $\bp(k\conj{g})$ is in the linear span of the set of all the kernels 
of functionals defined by 
$f \mapsto f^{(m)}(z_n)$, where $m$ is an integer with $0 \le m \le d_n$
and $n$ is an integer with $1 \le n \le N$. 
\end{theorem}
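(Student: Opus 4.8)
The plan is to reduce the general statement to the single-point results already established, namely Theorem~\ref{bp_f_conjg} combined with Proposition~\ref{kernel_derivative_0} and Proposition~\ref{general_proj_zero}. First I would use linearity of the Bergman projection to break the problem apart. Since $k$ is a linear combination of the kernels of the functionals $f \mapsto f^{(d_n)}(z_n)$, and since $\bp$ is linear, it suffices to prove the claim when $k$ is a single such kernel, say the kernel $k_n$ for $f \mapsto f^{(d_n)}(z_n)$ at one point $z_n$. The linear span of kernels is preserved under taking linear combinations, so once each $\bp(k_n \conj{g})$ lies in the span of the kernels of $f \mapsto f^{(m)}(z_n)$ for $0 \le m \le d_n$, their sum lies in the combined span over all $n$.

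Having reduced to a single point $z_n$ with a single derivative order $d_n$, I would split into the two cases already treated in the paper. When $z_n = 0$, the relevant kernel is $(d_n+1)!\,z^{d_n}$ by Theorem~\ref{kernel_derivative_0}, and that same theorem computes $\bp(z^{d_n}\conj{g})$ explicitly as $\sum_{j=0}^{d_n} \conj{(g^{(d_n-j)}(0)/(d_n-j)!)}\,\frac{j+1}{d_n+1}\,z^{j}$, which is manifestly a linear combination of the monomials $z^j$ for $0 \le j \le d_n$; these are (up to nonzero scalars) precisely the kernels of $f \mapsto f^{(j)}(0)$, so the claim holds at the origin. When $z_n \ne 0$, the kernel for $f \mapsto f^{(d_n)}(z_n)$ is $(d_n+1)!\,z^{d_n}/(1-\conj{z_n}z)^{d_n+2}$ by Proposition~\ref{kernel_f_(n)}, and I would invoke the machinery of Proposition~\ref{general_proj_zero}, whose proof shows (taking $N = d_n$ and allowing $g$ to have a zero of any order, including $n=0$) that $\bp$ of such a kernel times $\conj{g}$ is a linear combination of the functions $1/(1-\conj{z_n}z)^{k+2}$ for the appropriate range of $k \le d_n$. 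By Proposition~\ref{(1-az)n+2_kernel}, each such function is the kernel of a functional that is a linear combination of $f(z_n), f'(z_n), \ldots, f^{(k)}(z_n)$, and hence lies in the span of the kernels of $f \mapsto f^{(m)}(z_n)$ for $0 \le m \le d_n$.

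The main bookkeeping obstacle is that Proposition~\ref{general_proj_zero} is stated only for the pure kernel $1/(1-\conj{a}z)^{N+2}$, whereas the kernel for $f \mapsto f^{(d_n)}(z_n)$ carries the extra factor $z^{d_n}$ in the numerator. I would handle this exactly as in the proof of Proposition~\ref{(1-az)n+2_kernel}: expand $z^{d_n}/(1-\conj{z_n}z)^{d_n+2}$ into partial fractions $\sum_{j} b_j/(1-\conj{z_n}z)^j$, so that the kernel becomes a linear combination of the pure kernels, and then apply Proposition~\ref{general_proj_zero} (or directly its argument via Theorem~\ref{bp_f_conjg}) to each term. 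In fact the cleanest route avoids partial fractions entirely: by Theorem~\ref{bp_f_conjg}, $\bp(k_n\conj{g})$ is the kernel of the functional $f \mapsto (d_n+1)!^{-1}(fg)^{(d_n)}(z_n)$ up to a constant, and expanding by the Leibniz rule expresses this as a linear combination of $f^{(m)}(z_n)$ for $0 \le m \le d_n$ with coefficients built from the derivatives $g^{(d_n-m)}(z_n)$. This directly exhibits $\bp(k_n\conj{g})$ as a linear combination of the kernels of $f \mapsto f^{(m)}(z_n)$, completing the proof after summing over $n$. The only point requiring care is checking that each product $fg$ lies in an appropriate Bergman space so that the functionals are well defined and Theorem~\ref{bp_f_conjg} applies, which follows from the hypotheses $g \in A^p$ and the fact that the kernels $k_n$ lie in $A^{q}$ for all finite $q$ since they are bounded on $\D$.
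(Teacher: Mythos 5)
Your proposal is correct and, in its final streamlined form (linearity to reduce to a single kernel $k_n$, then Theorem~\ref{bp_f_conjg} plus the Leibniz rule to exhibit $\bp(k_n\conj{g})$ as the kernel of a linear combination of the functionals $f \mapsto f^{(m)}(z_n)$, $0 \le m \le d_n$), is essentially identical to the paper's proof. The intermediate detours through case-splitting at the origin, Proposition~\ref{general_proj_zero}, and partial fractions are unnecessary but harmless, and you correctly discard them in favor of the direct argument.
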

\begin{proof}
Let $k = \sum_{n=1}^N a_n k_n$, where $k_n$ is the kernel for the functional
$f \mapsto f^{(d_n)}(z_n)$. 
Then by Theorem \ref{bp_f_conjg}, $\bp(k_n \conj{g})$ is the kernel 
of the functional
$$f \mapsto (fg)^{(d_n)}(z_n) = \sum_{j=0}^{d_n} \binom{d_n}{j} 
f^{(j)}(z_n) g^{(d_n-j)}(z_n).$$
But this functional is a linear combination of functionals of the form
$$ f \mapsto f^{(m)}(z_n),$$ where $0 \le m \le d_n.$ 
\end{proof}

Due to their relation with extremal problems, we are often concerned with 
projections of the form 
$\bp(F^{p/2}\conj{F}\hbox{}^{(p/2)-1})$, where $F$ is an analytic function. 
This is well defined because 
\[ F^{p/2} \conj{F}\hbox{}^{(p/2)-1} = |F|^p/\conj{F} 
= |F|^{p-1} \sgn F.\]
The following theorems deal with this situation.
\begin{theorem}\label{dpeval_Fp2_answer} 
Let $1<p<\infty$, and let $F \in A^p$.  Furthermore, suppose 
if $p<2$ that $F^{(p/2)-1} \in A^{p_1}$ for some $p_1 > 1$. 
Also, suppose that $F^{p/2}$ is analytic and is a linear 
combination of the kernels $k_n$ corresponding to the functionals  
$f \mapsto f^{(d_n)}(z_n)$ for some integers $d_n$ and some points 
$z_n \in \D$, where $1 \le n \le N$, and where $N$ is an integer. 
Then $F$ satisfies
\[ \begin{split}
\|F\|_{A^p} = \inf \{\|f\|_{A^p} : f^{(m)}(z_n) = 
F^{(m)}(z_n)  &\text{ for all 
$n$ and $m$ such that}\\ &\text{ \hspace{3ex} $1 \le n \le N$ and 
$1 \le m \le d_n$}\}.
\end{split} \]
\end{theorem}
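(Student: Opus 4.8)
The strategy rests on Theorem \ref{min_int1}: for a fixed finite family of functionals, $F$ solves the associated minimal interpolation problem precisely when $\bp(|F|^{p-1}\sgn F)$ lies in the linear span of their kernels. Thus the whole problem becomes one of locating $\bp(|F|^{p-1}\sgn F)$ and identifying which derivative-evaluation kernels enter it. The bridge to the hypotheses is the pointwise identity $|F|^{p-1}\sgn F = F^{p/2}\,\conj{F^{(p/2)-1}}$ noted above, which exhibits the integrand as an analytic function times the conjugate of an analytic function, exactly the form handled by Theorem \ref{dpeval_conj_proj}.

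First I would set $k = F^{p/2}$ and $g = F^{(p/2)-1}$, so that $\bp(|F|^{p-1}\sgn F) = \bp(k\,\conj g)$. By hypothesis $k$ is analytic and a linear combination of the kernels $k_n$ of $f \mapsto f^{(d_n)}(z_n)$, matching the structural assumption of Theorem \ref{dpeval_conj_proj}. Before applying that theorem I must verify the integrability condition it inherits from Theorem \ref{bp_f_conjg}. By Proposition \ref{kernel_f_(n)} each $k_n$ is a constant multiple of $z^{d_n}/(1-\conj{z_n}z)^{d_n+2}$, whose only singularity lies at $1/\conj{z_n} \notin \overline{\D}$; hence $k$ is bounded and I may assign it the exponent $q_1 = \infty$. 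For $g$ I need membership in some $A^{q_2}$ with $q_2 > 1$: when $p \ge 2$ the exponent $p/2 - 1$ is nonnegative, $g$ is analytic, and $\int_{\D}|F|^{p-2}\,d\sigma < \infty$ gives $g \in A^2$; when $p < 2$ this is precisely the standing hypothesis $F^{(p/2)-1} \in A^{p_1}$ with $p_1 > 1$. Taking $q_1 = \infty$ and $q_2 = p_1$ makes the hypotheses of Theorem \ref{dpeval_conj_proj} hold.

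Theorem \ref{dpeval_conj_proj} then places $\bp(|F|^{p-1}\sgn F)$ in the linear span of the kernels of the functionals $f \mapsto f^{(m)}(z_n)$ with $1 \le n \le N$ and $0 \le m \le d_n$. Selecting a linearly independent subfamily and applying Theorem \ref{min_int1}, I conclude that $F$ has least norm among all $A^p$ functions sharing the prescribed derivative values at the points $z_n$, which is the asserted identity for $\|F\|_{A^p}$.

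The step needing the most care is the precise range of $m$, which is what the statement ultimately records. The application of Theorem \ref{dpeval_conj_proj} naturally produces the indices $0 \le m \le d_n$, and the evaluation functionals ($m = 0$) are in general present: expanding $(fg)^{(d_n)}(z_n)$ yields a term $g^{(d_n)}(z_n)\,f(z_n) = (F^{(p/2)-1})^{(d_n)}(z_n)\,f(z_n)$, whose coefficient need not vanish. I would therefore pin down exactly which functionals survive in the span before invoking Theorem \ref{min_int1}, since that determines the exact interpolation problem $F$ solves; apart from this, and the integrability bookkeeping above (the only place the $p<2$ hypothesis is used), the proof is a direct assembly of Theorems \ref{dpeval_conj_proj} and \ref{min_int1}.
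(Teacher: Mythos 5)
Your proof is correct and is essentially identical to the paper's, whose entire argument for this theorem is the one-line citation of Theorems \ref{dpeval_conj_proj} and \ref{min_int1}; your integrability bookkeeping (the kernels $k_n$ are bounded since their poles lie at $1/\conj{z_n} \notin \overline{\D}$, so $q_1 = \infty$ works, with $q_2 = 2$ when $p \ge 2$ since $\int_{\D}|F|^{p-2}\,d\sigma \le \int_{\D}(1+|F|^p)\,d\sigma < \infty$, and $q_2 = p_1$ when $p < 2$) simply makes explicit what the cited theorems require. Your closing concern about the range of $m$ is well founded: the argument genuinely produces the kernels with $0 \le m \le d_n$, and since the $m=0$ coefficient $(F^{(p/2)-1})^{(d_n)}(z_n)$ need not vanish, the condition $f(z_n) = F(z_n)$ cannot in general be dropped from the constraint set, so the printed range ``$1 \le m \le d_n$'' in the statement appears to be a typo for ``$0 \le m \le d_n$'' (compare Theorem \ref{min_int_poly1}, where the order-zero condition $f(0)=F(0)$ is included).
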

\begin{proof}
This follows from Theorems \ref{dpeval_conj_proj} and \ref{min_int1}. 
\end{proof}

The following theorem is a consequence of Theorem \ref{dpeval_conj_proj}.  
It can also be proved by using Taylor series and Proposition \ref{bp_monomial}.
\begin{theorem}\label{poly_conj_proj}
Let $f$ be a polynomial of degree at most $N$ 
and let $g \in A^p$ for some $p > 1.$  Then 
$\bp(f\conj{g})$ is a polynomial of degree at most $N$. 
\end{theorem}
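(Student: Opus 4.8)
The plan is to recognize a polynomial of degree at most $N$ as a linear combination of kernels of derivative-evaluation functionals based at a single point, namely the origin, and then to apply Theorem \ref{dpeval_conj_proj} directly. By Theorem \ref{kernel_derivative_0}, the kernel of the functional $h \mapsto h^{(j)}(0)$ is $(j+1)!\, z^j$, so each monomial $z^j$ is merely a scalar multiple of one of these kernels.

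First I would write $f(z) = \sum_{j=0}^N a_j z^j$ and observe that $f = \sum_{j=0}^N \frac{a_j}{(j+1)!}\bigl((j+1)!\, z^j\bigr)$ is a linear combination of the kernels of the $N+1$ functionals $h \mapsto h^{(j)}(0)$ for $0 \le j \le N$. In the language of Theorem \ref{dpeval_conj_proj}, these are functionals of the form $h \mapsto h^{(d)}(z_0)$ with every evaluation point equal to $0$ and the derivative orders $d$ ranging over $0, 1, \dots, N$. Thus $f$ has exactly the form demanded by the hypothesis of that theorem (with its internal count of functionals taken to be $N+1$), and $g \in A^p$ with $p > 1$ is precisely its other hypothesis.

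Applying Theorem \ref{dpeval_conj_proj} then shows that $\bp(f\conj{g})$ lies in the linear span of the kernels of the functionals $h \mapsto h^{(m)}(0)$ with $0 \le m \le N$. Since the kernel of $h \mapsto h^{(m)}(0)$ is $(m+1)!\, z^m$ by Theorem \ref{kernel_derivative_0}, each such kernel is a monomial of degree $m \le N$. Hence $\bp(f\conj{g})$ is a linear combination of $z^0, z^1, \dots, z^N$, that is, a polynomial of degree at most $N$, as claimed.

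An alternative route, indicated in the text, is computational: expand $g(z) = \sum_{n=0}^\infty b_n z^n$, so that $f\conj{g} = \sum_{m=0}^N \sum_{n=0}^\infty a_m \conj{b_n}\, z^m \conj{z}^{\,n}$, and apply $\bp$ termwise using Proposition \ref{bp_monomial}. Because $\bp(z^m\conj{z}^{\,n})$ vanishes when $m < n$ and equals $\frac{m-n+1}{m+1} z^{m-n}$ when $m \ge n$, and $m - n \le m \le N$, every surviving term has degree at most $N$. The one point requiring care here is the justification of the termwise projection: one should approximate $g$ by the partial sums of its Taylor series, which converge to $g$ in $A^p$ for $1 < p < \infty$, and use the boundedness of $\bp$ on $L^p$ to pass to the limit, noting that a norm limit of polynomials of degree at most $N$ is again such a polynomial. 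This interchange is the only genuine obstacle in either proof; the first approach avoids it entirely by leaning on the already-established Theorem \ref{dpeval_conj_proj}, which is why I would present that one as the main argument.
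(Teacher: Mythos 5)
Your proposal is correct and follows exactly the two routes the paper itself indicates: the main argument (writing $f$ as a linear combination of the kernels $(j+1)!\,z^j$ of the functionals $h \mapsto h^{(j)}(0)$ and invoking Theorem \ref{dpeval_conj_proj}) is precisely what the paper means when it calls the result ``a consequence of Theorem \ref{dpeval_conj_proj},'' and your alternative via Taylor series and Proposition \ref{bp_monomial} matches the paper's second suggested route. Your version is in fact more detailed than the paper's, which leaves both arguments as remarks; in particular your careful justification of the termwise projection by $A^p$-convergence of partial sums and boundedness of $\bp$ fills in a step the paper does not spell out.
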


Using this theorem and Theorem \ref{min_int_poly1}, we immediately get the 
following result.
\begin{theorem}\label{poly_Fp2_answer}
Suppose that $F \in A^p$ and $F^{p/2}$ is a polynomial of degree $N$.
Furthermore, if $p<2$ suppose that $F^{(p/2)-1} \in A^{p_1}$ for some 
$p_1 > 1$.  
Then   
\[
\|F\|_{A^p} = \inf \{\|f\|_{A^p} : f(0) = F(0), \ldots, f^{(N)}(0)=F^{(N)}(0)\}.
\]
\end{theorem}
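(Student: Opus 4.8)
The plan is to reduce the statement to a direct combination of Theorem \ref{poly_conj_proj} and Theorem \ref{min_int_poly1}, as the remark preceding the theorem anticipates. The mechanism is the pointwise identity
\[
|F|^{p-1}\sgn F = F^{p/2}\,\conj{F}^{\,(p/2)-1},
\]
recorded in the discussion before Theorem \ref{dpeval_Fp2_answer}, which exhibits the projection $\bp(|F|^{p-1}\sgn F)$ governing the minimal interpolation problem as one of the projections treated in Theorem \ref{poly_conj_proj}, namely a polynomial times the conjugate of a Bergman space function.

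Concretely, I would apply Theorem \ref{poly_conj_proj} with the polynomial taken to be $F^{p/2}$ and with $g := F^{(p/2)-1}$, so that the product is $F^{p/2}\conj{g} = |F|^{p-1}\sgn F$. By hypothesis $F^{p/2}$ is a polynomial of degree $N$, hence of degree at most $N$, which is the first input the theorem demands. The only thing left to supply is the second input: that $g = F^{(p/2)-1}$ lies in $A^{p_1}$ for some $p_1 > 1$.

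For $1 < p < 2$ this membership is assumed outright in the hypotheses, so nothing is required. For $p \ge 2$ I would verify it by hand. Setting $s = (p/2)-1 \ge 0$, I first note that $F^{(p/2)-1} = F^{p/2}/F$ is analytic: away from the zeros of $F$ the quotient of analytic functions is analytic, while at a zero of $F$ of order $m$ the numerator $F^{p/2}$ vanishes to order $mp/2 \ge m$, so the singularity is removable. For the norm, $|F^{(p/2)-1}|^{p_1} = |F|^{(p/2-1)p_1}$, and choosing $p_1 = 2p/(p-2)$ (any $p_1 > 1$ when $p = 2$, where $g \equiv 1$) makes the exponent equal to $p$, so that $\int_{\D}|F^{(p/2)-1}|^{p_1}\,d\sigma = \|F\|_{A^p}^p < \infty$ with $p_1 = 2p/(p-2) > 1$. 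Hence $g \in A^{p_1}$.

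With both hypotheses of Theorem \ref{poly_conj_proj} met, that theorem gives that $\bp(|F|^{p-1}\sgn F) = \bp(F^{p/2}\conj{g})$ is a polynomial of degree at most $N$. Theorem \ref{min_int_poly1} then immediately yields
\[
\|F\|_{A^p} = \inf \{\|f\|_{A^p} : f(0) = F(0), \ldots, f^{(N)}(0)=F^{(N)}(0)\},
\]
as claimed. The one step carrying any content beyond invoking the two cited theorems is the verification that $F^{(p/2)-1} \in A^{p_1}$ in the range $p \ge 2$ — the removable-singularity observation at the zeros of $F$ together with the exponent count — and I expect this to be the only place where care is needed; everything else is bookkeeping, which is why the result can be said to follow ``immediately.''
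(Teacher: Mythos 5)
Your proof is correct and takes essentially the same route as the paper, which likewise obtains the result immediately by combining Theorem \ref{poly_conj_proj} (applied via the identity $\bp(|F|^{p-1}\sgn F) = \bp\bigl(F^{p/2}\,\conj{F}^{\,(p/2)-1}\bigr)$) with Theorem \ref{min_int_poly1}. Your explicit verification that $F^{(p/2)-1} = F^{p/2}/F$ is analytic and lies in $A^{2p/(p-2)}$ when $p \ge 2$ correctly fills in a routine detail the paper leaves implicit.
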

Note that  
$F^{p/2}$ can be a polynomial only if $F$ is nonzero in $\D$ or 
$p/2$ is rational and 
all the zeros of $F$ in $\D$ are of order a multiple of $s$, where 
$r/s$ is the reduced form of $p/2$. 
If $p$ is an even integer, this poses no restriction. 
Because of this, the case where $p$ is an even integer is often
easier to work with.

\section{Solution of Specific Extremal Problems}\label{specific_probs}

We will now discuss how to solve some specific minimal interpolation problems.
Since we are dealing with the powers $p/2$ and $2/p$, neither of which 
need be an integer, we will have to take care in our calculations.  We will
introduce a lemma to facilitate this.  The lemma basically says that if 
$f$ and $g$ are analytic functions nonzero at the origin, and if 
$f^{(n)}(0) = (g^p)^{(n)}(0)$ for all $n$ such that $0 \le n \le N,$ then 
$(f^{1/p})^{(n)}(0) = g^{(n)}(0)$ for all $n$ such that 
$0 \le n \le N.$  

To state the lemma 
we first need to introduce some notation.  Suppose that the constants 
$c_0, c_1, \ldots, c_N$ are given and that $c_0 \ne 0$,  and let 
$h(z) = c_0 + c_1 z + \cdots + c_N z^N.$ 
Suppose that $a = c_0^p$ for some 
branch of the function $z^p$.  
Let $U$ be a neighborhood of the origin such that $h(U)$ is contained in 
some half plane whose boundary contains the origin, and such that 
$0 \not \in h(U)$. Then we can define $z^p$ so 
that 
it is analytic in $h(U)$ and so that $c_0^p = a.$  
We let 
$\beta_j^p(a; c_0, c_1, \ldots, c_j)$ denote the $j^{th}$ derivative of $h(z)^p$ 
at $0$.  

Note that because of the chain rule for differentiation, 
$\beta_j^p$ only depends on $j$, the constants $c_0, \ldots, c_j$, and the 
numbers  $p$ and $a$.  For the same reason, 
the value of $\beta_j^p$ is the same if we replace the function $h$ in the 
definition of $\beta_j^p$ 
by any function $\widetilde{h}$ analytic near the origin such that 
$\widetilde{h}^{(j)}(0) = c_j$ for $1 \le j \le N$.   
\begin{lemma}
Let $c_0, c_1, \ldots, c_N$ be given complex numbers, and let $p$ be a 
real number.  
Suppose that $c_0 \ne 0$, and let $a_0 = c_0^p$, for some 
branch of $z^p$.  Then
$$c_j = \beta_j^{1/p}\left(c_0; \beta_0^p(a_0;c_0), \beta_1^p(a_0;c_0, c_1), \ldots, 
\beta_j^p(a_0;c_0, \ldots, c_j)\right).$$
\end{lemma}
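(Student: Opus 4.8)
The plan is to read both layers of $\beta$ as honest analytic operations and then to use the local identity $(h^p)^{1/p}=h$. First I would set $g(z)=h(z)^p$, choosing the branch of $z^p$ on $h(U)$ for which $g(0)=c_0^p=a_0$; this is legitimate and produces a function analytic on $U$ precisely because $h(U)$ omits the origin and lies in a half-plane bounded by a line through $0$. Directly from the definition of $\beta^p$, the derivatives of $g$ at the origin are $g^{(m)}(0)=\beta_m^p(a_0;c_0,\ldots,c_m)$, so the numbers appearing as the arguments of the outer $\beta_j^{1/p}$ are exactly the derivative data of $g$ at $0$ up to order $j$.

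Next I would appeal to the remark following the definition of $\beta$, namely that $\beta_j^q$ is unchanged if the underlying function is replaced by any analytic function with the same derivatives through order $j$ at the origin. Consequently $\beta_j^{1/p}\bigl(c_0;g^{(0)}(0),\ldots,g^{(j)}(0)\bigr)$ may be computed from the concrete function $g$ itself rather than from the auxiliary function built out of those numbers, giving $\beta_j^{1/p}(\cdots)=\bigl(g^{1/p}\bigr)^{(j)}(0)$. Here the branch of $w^{1/p}$ is the one pinned down by the first argument $c_0$, i.e.\ by $g(0)^{1/p}=a_0^{1/p}=c_0$, which is consistent since $a_0=c_0^p$.

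The crux of the matter, and the step I expect to be the main obstacle, is to upgrade the pointwise equality $g(0)^{1/p}=h(0)$ to the identity $g^{1/p}=h$ of analytic functions on a neighborhood of $0$. After shrinking $U$ so that $g(U)=h(U)^p$ avoids $0$ and is contained in a sector carrying a single-valued analytic branch of $w^{1/p}$, both $g^{1/p}$ and $h$ are continuous $p$-th roots of the nonvanishing function $g$ on the connected set $U$, and they coincide at $0$ by the branch normalization; since two continuous $p$-th roots of a nonvanishing function on a connected set that agree at one point agree everywhere, I conclude $g^{1/p}\equiv h$ on $U$. The delicate bookkeeping is entirely in the branch cuts: one must check that the standing hypotheses on $h$ are exactly what force $w\mapsto(w^p)^{1/p}$ to be the identity on $h(U)$ rather than multiplication by a $p$-th root of unity, and this is where the half-plane condition earns its keep.

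Finally, differentiating $g^{1/p}=h$ exactly $j$ times at the origin yields $\bigl(g^{1/p}\bigr)^{(j)}(0)=h^{(j)}(0)=c_j$, where the last equality records the convention (as in the remark) that the arguments of $\beta$ are the derivative values of the underlying function. Combining this with the second paragraph gives $c_j=\beta_j^{1/p}\bigl(c_0;\beta_0^p(a_0;c_0),\ldots,\beta_j^p(a_0;c_0,\ldots,c_j)\bigr)$ for each $j$, and since $\beta_j^{1/p}$ involves only the arguments through index $j$, the identity holds for all $j$ with $0\le j\le N$ at once, so no separate induction on $j$ is required.
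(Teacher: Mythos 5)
Your proof is correct, and its skeleton coincides with the paper's: form $g=h^p$ near the origin, identify the inner arguments $\beta_m^p(a_0;c_0,\ldots,c_m)$ with $g^{(m)}(0)$, use the invariance remark to compute the outer $\beta_j^{1/p}$ from $g$ itself as $(g^{1/p})^{(j)}(0)$, and reduce everything to the branch identity $g^{1/p}=h$. Where you genuinely diverge is in how that identity is established. The paper engineers it geometrically: it shrinks $U$ until $f(U)$ lies in a sector $\{re^{i\theta}: r>r_0,\ |\theta-\theta_0|<\pi/(2p)\}$, so that $z^p$ is injective on $f(U)$ with image in a half plane omitting $0$, and then \emph{defines} $z^{1/p}$ on $(f(U))^p$ as the inverse of that restriction, making $g^{1/p}=f$ true by construction. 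You instead invoke uniqueness of continuous roots agreeing at a point. One caution there: for non-integer real $p$ the pointwise statement ``$u^p=g$'' is ambiguous, so your uniqueness lemma should be phrased via logarithms to be airtight. Both of your functions have the form $e^{(1/p)\mu}$ with $\mu$ a continuous logarithm of $g$ on $U$ (namely $\mu=p\Log h$ for a branch of $\log$ on the half plane containing $h(U)$, and $\mu=L\circ g$ for a branch $L$ of $\log$ near $a_0$); two continuous logarithms of a nonvanishing function on a connected set differ by a constant in $2\pi i\Z$, so the ratio of the two roots is the constant $e^{2\pi i k/p}$, which the normalization $g^{1/p}(0)=c_0=h(0)$ forces to equal $1$. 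With that reformulation your monodromy argument is complete, and it is in one respect more robust than the paper's: it needs no sector geometry and works verbatim for every real $p\neq 0$, whereas the half-width $\pi/(2p)$ tacitly assumes $p>0$. What the paper's route buys in exchange is an explicit exhibition of the inverse branch, with no appeal to a uniqueness principle.
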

\begin{proof}
Let 
$a_j = \beta_j^p(a_0; c_0, \ldots, c_j)$ and 
$b_j = \beta_j^{1/p}(c_0; a_0, \ldots, a_j).$  Then $b_0=c_0.$

Now let $f(z) = \sum_{j=0}^N \frac{c_j}{j!} z^j$.  Then $f^{(j)}(0) = c_j$ 
for $0 \le j \le N$. 
Let $U$ be a neighborhood of $0$ such that there exist $r_0 > 0$ and 
$\theta_0 \in \R$ such that 
\[ 
f(U) \subset \left\{re^{i\theta}: r_0 < r \text{ and } 
\theta_0 - \frac{\pi}{2p} < \theta < \theta_0 + \frac{\pi}{2p}\right\}.
\]
Then $z^p$ can be defined as an analytic function in $f(U)$.  
Furthermore, the set
$V = (f(U))^p$ does not contain $0$ but is contained in some half plane, 
so $z^{1/p}$ can be defined as an analytic function in $V$ so that it is the 
inverse of the function $z^p$ defined in $f(U)$.  

Now define $g(z) = (f(z))^p$ for $z \in U.$  
Then 
$g^{(j)}(0) = a_j$ and $g^{1/p}(0) = c_0$,  so
$(g^{1/p})^{(j)}(0) = b_j$ for $0\le j \le N$.  
But $g(z)^{1/p} = f(z)$ for $z \in U$, so $b_j = c_j$ for $0 \le j \le N$. 
\end{proof}

We will now use the lemma to solve a specific extremal problem in certain 
cases. 
\begin{theorem}\label{nonzero_min_int1}
Let 
$c_0, \ldots, c_N$ be given 
complex numbers, and assume that $c_0 \ne 0$.
Suppose that $F \in A^p$, and $F^{(j)}(0) = c_j$ for $0 \le j \le N$, and 
\[
\|F\|_{A^p} = \inf \{\|g\|_{A^p} : g(0) = c_0, \ldots, g^{(N)}(0)=c_N\}.
\]
Let $a_0 = c_0^{p/2}$ for some branch of 
$z^p$. 
Define  
\[
f(z) = \sum_{j=0}^N \frac{\beta_j^{p/2}(a_0; c_0, \ldots, c_j)}{j!} z^j,
\]
where the $\beta_j^{p/2}$ are defined as in the beginning of this section. 
Suppose that $f^{1-(2/p)} \in A^{p_1}$ for some $p_1 > 1$. 
Also, suppose that $f$ has no zeros in $\D.$ 
Thus we may define $f^{2/p}$ so that it is analytic in $\D$ and 
so that $f^{2/p}(0) = c_0.$  
Then 
$$F = f^{2/p}.$$
The same result also holds if $p$ is rational, $2/p=r/s$ in lowest form, and 
every zero of $f$ has order a multiple of $s$. 
\end{theorem}
\begin{proof}
Note that $f^{2/p}$ is analytic in $\D$ since we have assumed $f$ 
has no zeros in $\D$, or that $p$ is rational and $2/p=r/s$ in 
lowest form and  
$f$ has only zeros whose orders are 
multiples of $s$.  Also, $f(0) = a_0$, so we may define $f^{2/p}$ so that 
$f^{2/p}(0) = c_0$. 
The $j^{th}$ derivative of $f^{2/p}$ at $0$ is 
$$\beta_j^{2/p}\left(c_0; \beta_0^{p/2}(a_0;c_0), \ldots, 
\beta_j^{p/2}(a_0;c_0, \ldots, c_j)\right)$$
for $0 \le j \le N$, which equals $c_j$ by the lemma. 
Thus $f^{2/p}$ is in contention to solve the extremal problem.

But
\[ 
\bp\left(\frac{|f^{2/p}|^p}{\,\conj{f}^{2/p}}\right) = 
\bp(f \conj{f}^{1-(2/p)})
\]
 is a polynomial of 
degree at most $N$ by Theorem 
\ref{poly_conj_proj}, so by Theorem \ref{poly_Fp2_answer}
we find $F = f^{2/p}$. 
\end{proof}

To apply this theorem, we need to show that $f$
 has no zeros in the unit disc, or has only zeros of suitable orders 
if $p$ is rational.  Then, as long as $f^{1-(2/p)} \in A^{p_1}$ 
for some $p_1 >1$, we have that $f^{2/p}$ is the extremal 
function.  Note that we do not need to know anything about the zeros of the 
extremal function itself to apply the theorem, but only about the zeros of $f$.  

Also note that if $f$ has no zeros in the unit disc, 
this theorem implies that the extremal function $F = f^{2/p}$ also has no 
zeros in the unit disc. It can also be shown that if $F$ has no zeros, then 
$F$ must equal $f^{2/p}$.  
This follows from \cite{Khavinson_Stessin}, Theorem B.  It also follows from 
the work on extremal problems in Bergman space posed over non-vanishing 
functions found in \cite{Khavinson_nonvanishing}, 
\cite{SheilSmall_2011}, \cite{SheilSmall_solution2012}, and 
\cite{SheilSmall_general2012}. 
The case where the extremal function has zeros is more challenging and 
not as well understood.  See Example \ref{one_zero_A4_example} for a 
problem in which the extremal function has one zero.

\begin{example} The solution to the minimal interpolation problem in 
$A^p$ with 
$f(0) = 1$ and $f'(0)=c_1$ is 
\[
F(z) = \left(1 + \frac{p}{2}c_1 z \right)^{2/p},
\]
provided that $|c_1| \le \frac{2}{p}$ (or $p=2$).
This is because $\beta_0^{p/2}(1;1, c_1) = 1$ and 
$\beta_1^{p/2}(1;1, c_1) = (p/2)c_1.$ 
For example, if $p = 4$ and $c_1 = \frac{1}{2}$, then 
\[
F(z) = (1+z)^{1/2}.
\] 
\end{example}
The above problem is also solved in \cite{Osipenko_Stessin} in more 
general form.  The solution to the extremal problem in the next 
example is more difficult. We do not know if it has been stated explicitly 
before, although in the case in which the extremal 
function has no zeros it does follow from 
\cite{Khavinson_Stessin}, Theorem B, 
or from the results in \cite{SheilSmall_2011}, 
if it is assumed \emph{a priori} that 
the extremal function has no zeros. 

\begin{example} The solution to the minimal interpolation problem in 
$A^p$ for $1<p<\infty$ with $F(0)=1$, and $F'(0) = c_1$, and $F''(0) = c_2$ is 
\[
F(z) = \left\{ 1 + (p/2)c_1z + [(p(p-2)/4)c_1^2 + (p/2)c_2]z^2 \right\}^{2/p},
\]
provided that the quadratic polynomial under the $2/p$ exponent in the 
equation for $F$ has no zeros in $\D$.  The solution is the same 
if $p=4$ or $4/3$ and the quadratic polynomial has a repeated 
root in the unit disc.  (The solution is also the same in the case 
$p=2$, no matter where the polynomial has roots). 
\end{example}

Linear extremal problems tend to be more difficult to solve than minimal 
interpolation problems involving derivative-evaluation functionals, 
because values of a function $f$ and its derivatives are generally easier to 
calculate than $\bp(|f|^{p-1}\sgn f)$.   Nevertheless it is possible to 
solve some linear extremal problems explicitly by the methods in this 
paper.  Here is one example. 
\begin{theorem}
Let $N \ge 1$ be an integer, let $1<p<\infty$, and let $b \in \C$ satisfy 
\[
|b| \ge 1 + \frac{1}{N+1}\left(1-\frac{2}{p}\right),
\]
and define
\[
a = 
\frac{|b|+\sqrt{|b|^2-\frac{4}{N+1}\left(1-\frac{2}{p}\right)}}{2}
\sgn b.
\]
Then the solution to the extremal problem in $A^p$ with kernel 
$z^N + b$ is  
\[ 
F(z) = 
\sgn(a^{1-(2/p)})
\frac{(z^N + a)^{2/p}}{\left(|a|^2 + 1/(N+1)\right)^{1/p}}.
\]
\end{theorem}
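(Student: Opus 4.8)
The plan is to apply Theorem \ref{lin_ext_proj}: since the claimed function $F$ is asserted to have $\|F\|_{A^p}=1$, it suffices to verify this normalization and to show that $\bp(|F|^{p-1}\sgn F)$ is a \emph{positive} scalar multiple of the kernel $z^N+b$. (Scaling a kernel by a positive constant leaves the extremal problem \eqref{norm1} unchanged, so this identifies $F$ as the extremal function for $z^N+b$ itself.) The point of the given form of $F$ is that, writing $c=\sgn(a^{1-(2/p)})\,(|a|^2+1/(N+1))^{-1/p}$ so that $F=c\,(z^N+a)^{2/p}$, the power $F^{p/2}=c^{p/2}(z^N+a)$ is a polynomial of degree $N$. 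This is exactly the situation controlled by Theorem \ref{poly_conj_proj}, which already guarantees that $\bp(|F|^{p-1}\sgn F)=\bp(F^{p/2}\,\conj{F}^{(p/2)-1})$ is a polynomial of degree at most $N$; the work is to pin down its coefficients.

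First I would reduce the projection to a computable form. Using $|F|^{p-1}\sgn F=F^{p/2}\conj{F}^{(p/2)-1}$, a short calculation gives
\[
|F|^{p-1}\sgn F=\frac{|c|^p}{\conj{c}}\,(z^N+a)\,(\conj{z}^{\,N}+\conj{a})^{1-(2/p)}.
\]
Before projecting I must check that this is legitimate: the hypothesis on $|b|$ will force $|a|\ge 1$ (see below), so $z^N+a$ has no zeros in $\D$, hence $F$ and $F^{(p/2)-1}$ are analytic, and when $p<2$ one checks $F^{(p/2)-1}\in A^{p_1}$ for some $p_1>1$ so that $\bp$ applies. To evaluate the projection I would expand the non-polynomial factor in a binomial series,
\[
(\conj{z}^{\,N}+\conj{a})^{1-(2/p)}=\conj{a}^{\,1-(2/p)}\sum_{j\ge 0}\binom{1-(2/p)}{j}\conj{a}^{\,-j}\,\conj{z}^{\,Nj},
\]
multiply by $z^N+a$, and project term by term with Proposition \ref{bp_monomial}. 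Since $\bp(z^N\conj{z}^{\,Nj})$ and $\bp(\conj{z}^{\,Nj})$ vanish for $j\ge 2$, only the $j=0$ and $j=1$ terms survive, and the result collapses to
\[
\bp(|F|^{p-1}\sgn F)=\mu\left(z^N+a+\frac{1-(2/p)}{(N+1)\,\conj{a}}\right),\qquad \mu=\frac{|c|^p}{\conj{c}}\,\conj{a}^{\,1-(2/p)} .
\]

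It then remains to identify this with a positive multiple of $z^N+b$, which amounts to two checks. For the constant term I would verify the scalar identity $a+\dfrac{1-(2/p)}{(N+1)\,\conj{a}}=b$. Writing $a=r\,\sgn b$ with $r>0$ turns this into the real quadratic $r^2-|b|\,r+\frac{1}{N+1}\bigl(1-\tfrac{2}{p}\bigr)=0$, whose larger root is precisely the $r$ in the definition of $a$; here the hypothesis $|b|\ge 1+\frac{1}{N+1}(1-2/p)$ is used twice, once to make the discriminant nonnegative (so $a$ is well defined) and once to guarantee $r=|a|\ge 1$, which is the zero-free condition invoked above. For the leading factor I would check that $\mu$ is real and positive: the factor $\sgn(a^{1-(2/p)})$ in $c$ is inserted precisely so that $\mu=\rho^{1-p}|a|^{1-(2/p)}>0$, where $\rho=(|a|^2+1/(N+1))^{1/p}$. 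Finally, the normalization $\|F\|_{A^p}=1$ follows from the direct computation $\int_{\D}|z^N+a|^2\,d\sigma=|a|^2+\frac{1}{N+1}$ (using $\int_{\D}|z|^{2m}\,d\sigma=1/(m+1)$), which is exactly why the denominator $(|a|^2+1/(N+1))^{1/p}$ appears in $F$.

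The main obstacle is the term-by-term projection of the non-polynomial factor $(\conj{z}^{\,N}+\conj{a})^{1-(2/p)}$ when $p<2$, where the exponent $1-(2/p)$ is negative. One must justify both the convergence of the binomial series (valid where $|z|^N<|a|$, hence throughout $\D$ since $|a|\ge 1$) and the interchange of the Bergman projection with the infinite sum, together with careful bookkeeping of the branches of the fractional powers so that the sign conventions in $c$ and in $\mu$ match. The borderline case $|a|=1$, that is $|b|=1+\frac{1}{N+1}(1-2/p)$, is the most delicate, since then $z^N+a$ has zeros on the boundary and one must confirm that $F^{(p/2)-1}$ still lies in some $A^{p_1}$ with $p_1>1$.
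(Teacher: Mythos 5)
Your proposal is correct and follows essentially the same route as the paper: verify $\|F\|_{A^p}=1$ by the direct computation $\int_{\D}|z^N+a|^2\,d\sigma=|a|^2+\tfrac{1}{N+1}$, expand the conjugated factor $(\conj{z}^{\,N}+\conj{a})^{1-(2/p)}$ (the paper phrases this as the series $a^{1-2/p}+(1-\tfrac{2}{p})a^{-2/p}z^N+O(z^{2N})$ for $F^{(p/2)-1}$), project term by term via Proposition \ref{bp_monomial} so only the first two terms survive, use the quadratic satisfied by $|a|$ to identify the result as $|a^{1-(2/p)}|\,(z^N+b)$ up to a positive factor, and conclude with Theorem \ref{lin_ext_proj}; your additional checks (the explicit quadratic verification of $a+\tfrac{1-2/p}{(N+1)\conj{a}}=b$, the series interchange, the borderline $|a|=1$) are sound refinements of details the paper leaves implicit. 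One small simplification: the hypothesis $F^{(p/2)-1}\in A^{p_1}$ is not needed here, since Theorem \ref{lin_ext_proj} only requires $|F|^{p-1}\sgn F\in L^q$, which is automatic for $F\in A^p$ with $\|F\|_{A^p}=1$.
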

In the above expression for $F(z)$, 
the branch of $(z^N+a)^{2/p}$ may be chosen arbitrarily,  
but the value of $\sgn(a^{1-(2/p)})$ 
must be chosen consistently with this choice.
Note that the functional associated with the kernel $z^N + b$ 
is 
\[ \phi(f) = bf(0) + (1/(N+1)!)f^{(N)}(0). \]
Also, observe that the hypothesis of the theorem 
holds for all $N$ and $p$ if $|b| \ge 3/2$. 

\begin{proof}
The condition 
\[
|b| \ge 1 + \frac{1}{N+1}\left(1-\frac{2}{p}\right)
\]
implies that $|a| \ge 1$, so that 
$z^N + a \ne 0$ in $\D$ and $F$ is an analytic function.  
Note that 
\[
\|(z^N + a)^{2/p}\|_{A^p}^p = 
\int_{\D} |z^N + a|^2\, d\sigma = 
\int_{\D} (z^N + a) \conj{(z^N + a)}\, d\sigma = 
|a|^2 + \frac{1}{N + 1}.
\]
Thus, $\|F\|_{A^p} = 1$.

Now   
$$((z^N + a)^{2/p})^{p/2-1} = 
a^{1-2/p}+\left(1-\frac{2}{p}\right)a^{-2/p}z^N + O(z^{2N}),
$$
where we choose branches so that $((z^N+a)^{2/p})^{p/2} = z^N + a.$ 
We calculate that 
\[
\begin{split}
&\quad\;\bp\left( |z^N+a|^{p-1} \sgn(z^N + a) \right)  \\
&=\bp\left( (z^N + a) \conj{(z^N + a)}^{1-2/p}\right) \\ 
&=\bp\left( (z^N + a) \left( \conj{a^{1-2/p}} + \left(1-\frac{2}{p}\right)\conj{a^{-2/p}}\,
\conj{z^N} + O(\conj{z^{2N}})\right) \right).
\end{split}
\]
But by Proposition \ref{bp_monomial}, this equals 
\[
\begin{split}
&\quad\; \bp\left[ (z^N + a) \left( \conj{a^{1-2/p}} + \left(1-\frac{2}{p}\right)
\conj{a^{-2/p}}\,
\conj{z^N}\right) \right]\\
&=a \conj{a^{1-2/p}} + \frac{1}{N+1}\left(1-\frac{2}{p}\right)
\conj{a^{-2/p}}
+ \conj{a^{1-(2/p)}} z^N\\
&= 
\conj{a^{1-(2/p)}}\left(
z^N + a + \frac{1}{N+1}\left(1-\frac{2}{p}\right)\conj{a}\,^{-1} \right)\\
&= \conj{a^{1-(2/p)}}(z^N + b).
\end{split}
\]

Thus, 
\[
\begin{split}
&\quad\; \bp\left\{ \left |\sgn(a^{1-(2/p)})(z^N+a)\right|^{p-1}
\sgn\left( \sgn(a^{1-(2/p)})(z^N + a) \right) \right\}\\ 
&=\conj{a^{1-(2/p)}} \sgn(a^{1-(2/p)}) (z^N + b) \\ 
&=|a^{1-(2/p)}\!|\,(z^N + b).
\end{split}
\]

Therefore, 
\[
\bp(F^{p/2}\conj{F^{(p/2)-1}}) = 
|a^{1-(2/p)}| \frac{z^N + b}{\left(|a|^2 + 1/(N+1)\right)^{(p-1)/p}}.
\] 
Since $\|F\|_{A^p} = 1$, Theorem \ref{lin_ext_proj} shows that $F$ is the 
extremal function for the kernel on the right of the above equation. 
But that kernel is a positive scalar multiple of $k$, 
so $F$ is also the extremal 
function for $k$. 
\end{proof}

\begin{example}\label{one_zero_A4_example}
Let $a \in \D\setminus\{0\}$ and let $b, c \in \C$.  Consider the function
\[
f(z) = \frac{1}{a}\frac{a-z}{1-\conj{a}z}\left( 1+bz+cz^2 \right)^{1/2},
\]
where we assume that $1+bz+cz^2$ has no zeros in $\D$, or 
a double zero in $\D$ (so that 
$f$ is analytic).  We choose the branch of this function so that 
$f(0)=1$.  Then a calculation shows that 
\[
\begin{split}
f'(0) &= \frac{1}{a} \left(\frac{ab}{2}+|a|^2-1\right), \text{ and}\\
f''(0) &= \frac{1}{a} \left( |a|^2 b +2a\conj{a}^2 + ac - 2\conj{a} - b 
                             -\frac{ab^2}{4} \right).\\
\end{split}
\]
Another calculation shows that the residue of $f^2$ about $\conj{a^{-1}}$ is 
equal to
\[
|a|^{-4} \conj{a}^{-3}
\left[
(|a|^2-1)^2 (2c + \conj{a}b) - 
2 (|a|^2 -1)(\conj{a}^2 + c + \conj{a}b) 
\right].
\]

Now, suppose that $v_1$ and $v_2$ are complex numbers, and that we want to 
solve the minimal interpolation problem of finding $F \in A^4$ such that 
$F(0)=1, F'(0)=v_1, F''(0)=v_2$, and with $\|F\|_{4}$ as small as possible.
If we can find numbers $a$, $b$, and 
$c$ so that $(1+bz+cz^2)$ has no zeros in $\D$, or 
a repeated zero, and so that
\[
\begin{split}
v_1 &= \frac{1}{a} \left(\frac{ab}{2}+|a|^2-1\right)\\
v_2 &= \frac{1}{a} \left( |a|^2 b +2a\conj{a}^2 + ac - 2\conj{a} - b 
                             -\frac{ab^2}{4} \right)\\
0 &=  (|a|^2-1)^2 (2c + \conj{a}b) - 
       2 (|a|^2 -1)(\conj{a}^2 + c + \conj{a}b)
\end{split}
\]
then 
\[
F(z) = f(z) = \frac{1}{a} \frac{a-z}{1-\conj{a}z} 
                \left( 1 + bz + cz^2 \right)^{1/2}.
\]
To see this, note that in this case 
\[
f(z)^2 = a_1 z^2 + a_2 z + a_3 + \frac{a_4}{(1-\conj{a}z)^2}
\]
for some constants $a_1, \ldots, a_4.$
Then 
\[ \bp\left[ \left(a_1 z^2 + a_2 z + a_3\right) \conj{f(z)} \right] \]
will be a polynomial of degree at most two by 
Theorem \ref{poly_conj_proj}.  Also, since $f(a) = 0$, we have 
\[ 
\bp \left[ \frac{a_4}{(1-\conj{a}z)^2} \conj{f(z)}\right] = 0
\]
by Proposition \ref{general_proj_zero}.
Thus, $\bp(f^2 \conj{f})$ is a polynomial, and so $f$ solves the extremal 
problem in question, by Theorem \ref{min_int_poly1}.

\end{example}

\section{Canonical Divisors}\label{canon_divisors}

We will now discuss how our previous results apply to canonical divisors.  
These divisors are the Bergman space analogues of 
Blaschke products.  
They were first introduced in the $A^2$ case in 
\cite{Hedenmalm_canonical_A2}, and were further studied for general
$p$ in \cite{DKSS_Pac} and \cite{DKSS_Mich}.  
The formula for a canonical divisor with one zero is well known, see
for example \cite{D_Ap}.
In 
\cite{Hansbo}, a formula was obtained for canonical divisors with 
two zeros, as well as with more zeros under certain symmetry
conditions on the zeros.  
In \cite{Luciano_Naris_Schuster}, a method is given for finding the 
canonical divisor in $A^2$ for an arbitrary finite zero set.  
In \cite{MacGregor_Stessin}, a fairly explicit formula for 
canonical divisors is obtained for general $p$. 
In this section, we discuss how the methods
of this paper apply to the problem of finding canonical divisors in
the case where $p$ is an even integer.  
The results we obtain are similar to those in 
\cite{MacGregor_Stessin}.  

By the zero-set of an $A^p$ function not identically $0$, 
we mean its collection of zeros, repeated according to multiplicity.  Such 
a set will be countable, since the zeros of analytic functions are 
discrete.  Given an $A^p$ zero set, we can consider the space $N^p$  
of all functions that vanish on that set.  More precisely, $f\in A^p$ 
is in $N^p$ if it vanishes at every point in the 
given zero set, to at least the prescribed multiplicity.  

If the zero set does not include $0$, 
we pose the 
extremal problem of finding $G \in N^p$ such that $\|G\|_{A^p} = 1$, and 
such that $G(0)$ is positive and as large as possible.  If the zero set 
has a zero of order $n$ at $0$, we instead maximize $G^{(n)}(0)$.  For 
$0<p<\infty$, this problem has a unique solution, which is called the 
canonical divisor.  For $1<p<\infty$, this follows from the fact that an 
equivalent problem is to find an $F \in N^p$ with 
$F(0)=1$ and $\|F\|_{A^p}$ as small as possible.  It is well known that 
the latter problem has a unique solution 
(see e.g.~\cite{tjf1}, Proposition 1.3). 

In this section, we discuss the problem of determining the canonical divisor 
when $p$ is an even integer, and the zero set is finite.  We show how the 
methods of this paper can be used to characterize the 
canonical divisor.   Our methods show that if $G$ is the canonical 
divisor, then $G^{p/2}$ is 
a rational function with residue $0$ at each of its poles,
which is the content of the following theorem.

\begin{theorem}\label{canonical_nec}
Let $p$ be an even integer.
Let $z_1, \ldots, z_N$ be distinct points in $\D$, and 
consider the zero-set consisting of 
each of these points with multiplicities $d_1, \ldots, d_N$, respectively. 
Let $G$ be the canonical divisor for this zero set. 
Then there are constants $c_0$ and $c_{nj}$ for $1 \le n \le N$ and 
$0 \le j \le (p/2)d_n - 1$, such that 
\[
\begin{split}
G(z)^{p/2} &= c_0 + \sum_{n=1}^N \sum_{j=0}^{(p/2)d_n - 1} 
         \frac{c_{nj}}{(1-\conj{z_n}z)^{j+2}}, 
\qquad \quad \text{if $z_n \ne 0$ for all $n$, and}\\
G(z)^{p/2} &= c_0 z^{(p/2)d_1} + \sum_{j=0}^{(p/2)d_1-1} c_{1j}z^j 
           + \sum_{n=2}^N \sum_{j=0}^{(p/2)d_n - 1} 
         \frac{c_{nj}}{(1-\conj{z_n}z)^{j+2}},
\quad \text{if $z_1 = 0.$}
\end{split}
\]
\end{theorem}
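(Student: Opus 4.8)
The plan is to characterize the canonical divisor $G$ via the minimal-interpolation machinery of Section \ref{rel_berg_ext}, since $G$ is exactly the solution to the problem of minimizing $\|F\|_{A^p}$ over $F\in N^p$ with the appropriate normalization at the origin. The key observation is that membership in $N^p$ (vanishing to order $d_n$ at $z_n$) together with the normalization at $0$ can be encoded as a finite list of derivative-evaluation constraints, so $G$ solves a minimal interpolation problem of the type governed by Theorem \ref{min_int1}. More precisely, I would first argue that $G$ is the extremal function for the problem of minimizing norm subject to prescribing $G^{(m)}(z_n)=0$ for $0\le m\le d_n-1$ at each nonzero $z_n$ (or the analogous derivatives at $0$), plus the normalization $G^{(s)}(0)=1$ where $s$ is the order of vanishing at the origin. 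The crucial point is that one must check that the \emph{active} constraints are exactly these, so that Theorem \ref{min_int1} applies with this specific list of functionals.

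The heart of the argument is then to apply Theorem \ref{min_int1}: $G$ solves this minimal interpolation problem if and only if $\bp(|G|^{p-1}\sgn G)=\bp(G^{p/2}\,\conj{G}\hbox{}^{(p/2)-1})$ is a linear combination of the kernels of the constraint functionals. Since $p$ is an even integer, $p/2$ is an integer and $G^{p/2}$ is genuinely analytic, so no branch-cut difficulties arise and $\conj{G}\hbox{}^{(p/2)-1}=\conj{G^{p/2}}/\conj{G}$ makes $|G|^{p-1}\sgn G = G^{p/2}\conj{G}\hbox{}^{(p/2)-1}$ a legitimate product. By Proposition \ref{kernel_f_(n)}, the kernel for $f\mapsto f^{(m)}(z_n)$ is $(m+1)!\,z^m/(1-\conj{z_n}z)^{m+2}$, and by Proposition \ref{(1-az)n+2_kernel} the functions $1/(1-\conj{z_n}z)^{j+2}$ for $0\le j\le d_n-1$ span the same space as these kernels at the nonzero points $z_n$; at the origin the kernels are $(m+1)!\,z^m$, giving monomials. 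Thus the condition ``$\bp(G^{p/2}\,\conj{G}\hbox{}^{(p/2)-1})$ lies in the span of the constraint kernels'' is precisely the assertion that this projection has the stated rational form.

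Next I would pass from information about the \emph{projection} $\bp(G^{p/2}\,\conj{G}\hbox{}^{(p/2)-1})$ to information about $G^{p/2}$ itself. Here is where the residue-zero condition and the particular exponent ranges $0\le j\le (p/2)d_n-1$ enter. I would examine the form of $G^{p/2}$ directly as a function analytic in $\D$ whose only possible singularities (after the projection identity is unwound) sit at the reflected points $1/\conj{z_n}$, and use Proposition \ref{general_proj_zero} to track how poles of a given order interact with $G$ vanishing to order $d_n$. Specifically, the exponent $(p/2)d_n$ reflects that $G^{p/2}$ inherits a zero of order $(p/2)d_n$ at $z_n$, which after combining with the projection structure forces the poles at $1/\conj{z_n}$ to have orders bounded by $(p/2)d_n+1$ but with \emph{vanishing residue}, i.e.\ no $1/(1-\conj{z_n}z)$ term. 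The partial-fraction manipulation in the proof of Proposition \ref{(1-az)n+2_kernel}, which shows $b_0=b_1=0$, is exactly the mechanism producing the absence of the residue term, and I would adapt it here.

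The main obstacle I anticipate is the passage in the previous paragraph: deducing the explicit rational form of $G^{p/2}$, \emph{including the sharp upper index} $(p/2)d_n-1$ and the vanishing-residue property, from the mere membership of the projection in the span of kernels. Knowing that $\bp(G^{p/2}\,\conj{G}\hbox{}^{(p/2)-1})$ is a combination of kernels $1/(1-\conj{z_n}z)^{j+2}$ does not immediately tell us the form of $G^{p/2}$; one must invert the relationship between $G^{p/2}$, its conjugate power, and the projection. I expect this requires writing $G^{p/2}$ as an unknown rational function with poles only at the reflected points, expanding $\conj{G}\hbox{}^{(p/2)-1}$ in terms of $\conj{z_n}$-data, computing the projection monomial-by-monomial via Proposition \ref{bp_monomial}, and matching the result against the allowed span. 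Establishing that exactly the orders $j+2$ up to $(p/2)d_n+1$ appear (equivalently $0\le j\le (p/2)d_n-1$), and that the residue term is forced to vanish, is the delicate bookkeeping step; the even-integer hypothesis on $p$ is what keeps all these expansions polynomial/rational and makes the counting clean.
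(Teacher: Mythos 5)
There is a genuine gap, and it sits exactly at the step you flag as the main obstacle: passing from the form of the projection $\bp\bigl(G^{p/2}\,\conj{G}\hbox{}^{(p/2)-1}\bigr)$ to the form of $G^{p/2}$ itself. Theorem \ref{min_int1} tells you only that the projection lies in the span of the kernels of the constraint functionals (derivatives of orders $0,\ldots,d_n-1$ at $z_n$, i.e.\ poles of order at most $d_n+1$), and the map $G \mapsto \bp(|G|^{p-1}\sgn G)$ is nonlinear with no usable inverse; knowing its output is rational says nothing a priori about $G^{p/2}$. Your proposed remedy --- ``writing $G^{p/2}$ as an unknown rational function with poles only at the reflected points'' and matching coefficients --- assumes precisely what the theorem asserts: a priori $G^{p/2}$ is merely an analytic function on $\D$, and its rationality is the conclusion, not an admissible ansatz. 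Moreover, Proposition \ref{general_proj_zero} runs only in the forward direction (it computes the projection once a rational form is already known), which is why the paper uses it for the \emph{converse} statement, Theorem \ref{canonical_nec_suf}; it cannot carry the burden you assign it here.

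The paper's actual mechanism avoids any inversion by working on the dual side: it studies the functional $f \mapsto \int_{\D} f\,\conj{G}^{p/2}\,d\sigma$ directly for arbitrary $f \in A^p$. Given $f$, one subtracts a polynomial interpolant to obtain $\widehat{f}$ vanishing to order $A_n = \bigl((p/2)-1\bigr)d_n$ at each $z_n$, so that $\widetilde{f} = \widehat{f}/G^{(p/2)-1} \in A^p$; then
\[
\int_{\D} f\,\conj{G}^{p/2}\,d\sigma
= \int_{\D} |G|^{p-1}\,\conj{\sgn G}\;\widetilde{f}\,d\sigma
\;+\; \bigl(\text{a linear combination of the } f^{(j)}(z_n),\ j < A_n\bigr),
\]
and by Theorem \ref{min_int1} the first term is a combination of $\widetilde{f}^{(j)}(z_n)$ for $0 \le j \le d_n-1$, each of which unwinds by the Leibniz rule into a linear function of $f^{(k)}(z_n)$ for $k \le (d_n-1)+A_n = (p/2)d_n - 1$. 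Thus the functional is a finite combination of derivative evaluations with exactly the stated order bounds, and since $G^{p/2} \in A^2$ it must equal the corresponding combination of kernels $(k+1)!\,z^k/(1-\conj{z_n}z)^{k+2}$; the partial-fraction computation of Proposition \ref{(1-az)n+2_kernel} (your $b_0=b_1=0$ observation, correctly identified) then yields the pure powers $1/(1-\conj{z_n}z)^{j+2}$ with no first-order pole. Note in particular that the sharp exponent $(p/2)d_n - 1$ arises \emph{additively}, as $(d_n - 1)$ from the extremal problem's kernels plus $((p/2)-1)d_n$ from the division by $G^{(p/2)-1}$ --- not, as in your outline, from bookkeeping on the projection, which by itself only ever exhibits poles of order up to $d_n+1$. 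Without this duality-plus-division device your argument cannot close.
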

\begin{proof}
Our goal is to show that $G^{p/2}$ is the kernel for some linear combination
 of certain 
 derivative-evaluation functionals.  Because we know what the 
 kernel of any derivative-evaluation functional is, we will be able 
 to show that $G$ has the form stated in the theorem. 

Let $A_n = d_n( (p/2)-1)$.
For $1 \le n \le N$ and $0 \le j \le A_n-1$, 
let $h_{nj}$ be a polynomial 
such that 
\[ h_{nj}^{(m)}(z_k) = \begin{cases}
                     1, \text{ if $m=n$ and $k=j$ }\\
                     0, \text{ otherwise.}
\end{cases}
\]
For $f \in A^p$, define
\[
\widehat{f}(z) = f(z) - \sum_{n=1}^N \sum_{j=0}^{A_n-1} a_{nj} h_{nj}(z) 
\]
where $a_{nj} = f^{(j)}(z_n)$.  
Since $\widehat{f}$ has zeros of order $A_n=d_n( (p/2)-1)$ at each $z_n$, the function 
\[ 
\widetilde{f} = 
\frac{1}{G^{(p/2)-1}} \widehat{f}
\]
is in $A^p$.

But then
\[
\begin{split}
\int_{\D} \conj{G}^{p/2}  f\, d\sigma &= 
\int_{\D} \conj{G(z)}^{p/2} \left(
\widehat{f}(z) + \sum_{n=1}^N \sum_{j=0}^{A_n-1} a_{nj} h_{nj}(z)
\right)\,d\sigma \\
&= 
\int_{\D} |G(z)|^{p-1} \conj{\sgn{G(z)}} \widetilde{f}(z)\, d\sigma
+
\sum_{n=1}^N \sum_{j=0}^{A_n-1} a_{nj} \int_{\D} \conj{G(z)}^{p/2} h_{nj}(z) 
\,d\sigma\\
&= \mathrm{I} + \mathrm{II}.
\end{split}
\] 
Now, II is a linear combination of the numbers $a_{nj}$ for 
$1 \le n \le N$ and $0 \le j \le A_n - 1$, so we turn our attention to 
I.
The canonical divisor $G$ is a constant multiple of the function 
$F \in A^p$ of smallest norm that has zeros of order $d_n$ at each 
$z_j$ and such that $F^{(m)}(0)=1$, where $m$ is the order of the zero-set at 
$0$. 
By Theorem \ref{min_int1}, $\bp(|G|^{p-1} \sgn \conj{G})$ 
is the kernel for a linear combination of appropriate derivative evaluation 
functionals at the points $0, z_1, \cdots, z_n$. Thus, we 
have 
\[
\begin{split}
\int_{\D} |G|^{p-1}\conj{\sgn G} \widetilde{f}\, d\sigma &=
\int_{\D} \bp(|G|^{p-1}\conj{\sgn G}) \widetilde{f}\, d\sigma \\ &=
       b_0 \widetilde{f}^{(m)}(0) + 
       \sum_{n=1}^N \sum_{j=0}^{d_n-1} b_{nj} \widetilde{f}^{(j)}(z_n),
\end{split}
\]
for some complex constants $b_0$ and $b_{nj}$.  
Note that 
$\widetilde{f}(z) = G_n \widehat{f}_n$ where 
\[
G_n(z) = (z-z_n)^{A_n} \frac{1}{G^{(p/2)-1}(z)} 
\]
and 
\[
\widehat{f}_n(z) = (z-z_n)^{-A_n}
\widehat{f}(z).
\] 
Note that 
\[
\widetilde{f}^{(j)}(z_n) = \sum_{k=0}^j \binom{j}{k} 
\widehat{f}_n^{(k)}(z_n) G_n^{(j-k)}(z_n)
\]
and  
\[
\begin{split}
\widehat{f}_n^{(k)}(z_n) &= \frac{k!}{(k+A_n)!} 
\frac{d^{k+A_n}}{dz^{k+A_n}} 
\widehat{f}(z_n)\\
&=
\frac{k!}{(k+A_n)!}
\frac{d^{k+A_n}}{dz^{k+A_n}} 
\left[
f(z) - \sum_{n=1}^N \sum_{s=0}^{A_n-1} a_{ns} h_{ns}(z) 
\right].
\end{split}
\]
Thus, 
$\widetilde{f}^{(j)}(z_n)$ is a linear function of the numbers 
$a_{ns}$ and the numbers $f^{(k)}(z_n)$ for $0 \le k \le 
j + A_n.$  Recall that $a_{ns} = f^{(s)}(z_n)$.  

Also, if $m=0$, then 
\[ \widetilde{f}^{(m)}(0) = 
G(0)^{1-(p/2)} \widehat{f}(0) = 
G(0)^{1-(p/2)}
\left(f(0) - 
               \sum_{n=1}^N \sum_{j=0}^{A_n-1} a_{nj} h_{nj}(0)
\right),\]
so $\widetilde{f}^{(m)}(0)$ is a linear function of the numbers 
$a_{nj}$ and $f(0)$ = $f^{(mp/2)}(0)$.  
If $m \ne 0$, then we may assume $z_1=0$ and $m = d_1$, and then 
by the same reasoning as we used above for $\widetilde{f}^{(j)}(z_n)$, we 
see that$\widetilde{f}^{(m)}(z_1)$ is a linear function of the 
numbers $a_{nj}$ and the numbers 
$f^{(k)}(z_1)$ for $0 \le k \le d_1 + A_1 = d_1 + ((p/2)-1)d_1 = (p/2)d_1 = 
mp/2$. 
Thus, the term \[\mathrm{I} =\int_{\D} \conj{G^{(p/2)-1}} \widetilde{f}\, 
d\sigma\] 
is a linear combination of 
the numbers $f^{(k)}(z_n)$ for $0 \le k \le 
(d_n-1) + ((p/2)-1)d_n = (p/2)d_n - 1$, and the number $f^{(mp/2)}(0)$. 

Therefore, both I and II, and thus $\int_{\D} f \conj{G}^{p/2}\, d\sigma$, 
are linear combinations of the numbers
$f^{(k)}(z_n)$ for $0 \le k \le (p/2)d_n - 1$, and the number $f^{(mp/2)}(0)$. 
And thus, $G^{p/2}$ is the kernel for a derivative-evaluation functional 
depending on 
$f^{(j)}(z_n)$ for $1 \le n \le N$ and $0 \le j \le (p/2)d_n-1$,
as well as $f^{mp/2}(0)$.    
Therefore $G^{p/2}$ has the desired form.
\end{proof}

The previous theorem gave a condition on $G^{p/2}$ that must be satisfied 
if $G$ is the canonical divisor of a given zero set.  The following 
theorem says that condition, along with a few other more obviously 
necessary ones, is 
also sufficient.  
\begin{theorem}\label{canonical_nec_suf}
Let $p$ be an even integer.
Let $z_1, \ldots, z_N$ be distinct points in $\D$, 
and consider the zero-set consisting of 
each of these points with multiplicities $d_1, \ldots, d_N$, respectively. 
The canonical divisor for this zero set 
is the unique function $G$ having $A^p$ norm $1$ such that 
$G(0)>0$ (or $G^{(m)}(0) > 0$ if $G$ is required to have a zero of 
order $m$ at the origin), and such that 
$G^{p/2}$ 
has zeros of order $pd_n/2$ at each $z_n$, and 
\[
\begin{split}
G(z)^{p/2} &= c_0 + \sum_{n=1}^N \sum_{j=0}^{(p/2)d_n - 1} 
         \frac{c_{nj}}{(1-\conj{z_n}z)^{j+2}} \qquad 
           \text{if $z_n \ne 0$ for all $n$ or}\\
G(z)^{p/2} &= c_0 z^{(p/2)d_1} + \sum_{j=0}^{(p/2)d_1-1} c_{1j}z^j 
           + \sum_{n=2}^N \sum_{j=0}^{(p/2)d_n - 1} 
         \frac{c_{nj}}{(1-\conj{z_n}z)^{j+2}} \qquad
           \text{if $z_1 = 0$.}
\end{split}
\]
\end{theorem}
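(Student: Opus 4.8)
The plan is to establish this as a necessary‑and‑sufficient characterization by pairing Theorem~\ref{canonical_nec} with a converse argument built on the projection criterion of Theorem~\ref{min_int1}. The necessity half is already essentially in hand: if $G$ is the canonical divisor, then $G$ has norm $1$ and the appropriate positivity at the origin by definition, $G$ vanishes to order $d_n$ at each $z_n$ (so $G^{p/2}$ vanishes to order $pd_n/2$ there), and the stated rational form of $G^{p/2}$ is exactly the content of Theorem~\ref{canonical_nec}. Thus the canonical divisor is one function satisfying all the listed conditions, so it only remains to prove that these conditions force $G$ to be the canonical divisor.

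For the converse I would assume $G$ satisfies all four conditions and show it must be the canonical divisor. First, the vanishing hypothesis on $G^{p/2}$ gives that $G$ itself vanishes to order $d_n$ at each $z_n$, so $G\in N^p$ and $G$ is a legitimate competitor. The crux is to evaluate $\bp(|G|^{p-1}\sgn G)=\bp(G^{p/2}\,\conj{G^{(p/2)-1}})$ and show it is a linear combination of the kernels of the functionals $f\mapsto f^{(j)}(z_n)$ for $0\le j\le d_n-1$ together with the normalizing functional at the origin. Because $p$ is even, $G^{(p/2)-1}$ is an honest integer power of $G$ (no branch issues), and since $G^{p/2}$ is a rational function with all poles outside $\overline{\D}$ it is bounded, so $g:=G^{(p/2)-1}$ lies in every $A^q$; moreover $g$ vanishes to order $d_n((p/2)-1)$ at each $z_n$.

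With these facts I would apply the projection machinery term by term to the rational form of $G^{p/2}$. For each pole term $c_{nj}/(1-\conj{z_n}z)^{j+2}$ with $0\le j\le (p/2)d_n-1$, Proposition~\ref{general_proj_zero} together with the zero of order $d_n((p/2)-1)$ of $g$ shows that $\bp\bigl(\conj{g}/(1-\conj{z_n}z)^{j+2}\bigr)$ is a combination of $1/(1-\conj{z_n}z)^{k+2}$ with $0\le k\le j-d_n((p/2)-1)$; the reduction of orders forces every surviving index to satisfy $k\le d_n-1$, and the terms with $j<d_n((p/2)-1)$ project to zero. The constant term $c_0$ contributes $c_0\conj{g(0)}$, a multiple of the kernel for $f\mapsto f(0)$; in the case $z_1=0$ the polynomial part is handled identically using Theorem~\ref{kernel_derivative_0} (or Theorem~\ref{poly_conj_proj}), where the zero of $g$ of order $d_1((p/2)-1)$ at the origin forces the result to have degree at most $d_1$. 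Altogether $\bp(|G|^{p-1}\sgn G)$ lies in the span of the kernels of exactly the derivative‑evaluation functionals defining the problem.

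Theorem~\ref{min_int1} then yields that $G$ is the minimal‑norm function in $A^p$ with the values $G^{(j)}(z_n)=0$ and the prescribed value of $G^{(m)}(0)$; hence $G$ is a positive multiple of the canonical divisor, and the normalizations $\|G\|_{A^p}=1$ and $G^{(m)}(0)>0$ force $G$ to equal the canonical divisor, with uniqueness inherited from uniqueness of the canonical divisor. I expect the main obstacle to be the bookkeeping in the term‑by‑term projection: matching the pole order $j+2$ in $G^{p/2}$ against the zero order $d_n((p/2)-1)$ of $g$ so that every contribution collapses into the range $0\le k\le d_n-1$, and treating the origin term consistently in both the $z_1=0$ and $z_1\ne 0$ cases.
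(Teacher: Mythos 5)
Your proposal is correct and takes essentially the same route as the paper's own proof: necessity from Theorem \ref{canonical_nec} plus the definition, and sufficiency by projecting $G^{p/2}\,\conj{G}^{(p/2)-1}$ term by term via Proposition \ref{general_proj_zero} (with the polynomial-projection fact handling the $z_1=0$ case), observing that $j\le (p/2)d_n-1$ forces the surviving pole orders down to $k\le d_n-1$, and then invoking Theorem \ref{min_int1} together with the normalizations $\|G\|_{A^p}=1$ and $G^{(m)}(0)>0$. The details you add (boundedness of $g=G^{(p/2)-1}$ since the poles of $G^{p/2}$ lie outside $\overline{\D}$, and the vanishing of the projections when $j<d_n((p/2)-1)$) merely make explicit what the paper leaves implicit.
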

\begin{proof}
By Theorem \ref{canonical_nec} 
and the definition of the canonical divisor, 
the stated conditions are necessary for a function 
to be the canonical divisor. Suppose that $G$ is a function satisfying the 
stated conditions. We will prove the theorem by 
applying Theorem \ref{min_int1} to
$\bp(G^{p/2} \conj{G}^{(p/2)-1})$. 

We first discuss the proof under the assumption that $z_n \ne 0$ 
for all $n$.  
First, as above, $\bp\left(\conj{G}^{(p/2)-1}\right) = \conj{G(0)}^{(p/2)-1}.$
Now, by Proposition \ref{general_proj_zero}, 
\[
\bp\left( \frac{1}{(1-\conj{z_n}z)^{j+2}} \, \conj{G(z)}^{(p/2)-1}\right) = 
\sum_{k=0}^{j-((p/2)-1)d_n} C_{n,j,k} \frac{1}{(1-\conj{z_n}z)^{k+2}},
\]
where the constants $C_{n,j,k}$ may depend on $G$.  
But 
if  $j \le (p/2)d_n - 1$, then
$j-((p/2)-1)d_n \le d_n -1$. 
Thus 
\[
\bp\left( G^{p/2} \, \conj{G(z)}^{(p/2)-1}\right) = 
B_0 + \sum_{n=1}^N \sum_{k=0}^{d_n - 1} 
       \frac{B_{n,k}}{(1-\conj{z_n}z)^{k+2}},
\]
where $B_{n,k} = \sum_{j=k+((p/2)-1)d_n}^{(p/2)d_n - 1} c_{nj} C_{n,j,k}$ 
and $B_0 = c_0 \conj{G(0)}^{(p/2)-1}.$ 
By Theorem \ref{min_int1}, $G$ is a multiple of the canonical divisor. 
But the conditions that $G^{(m)}(0) > 0$ and $\|G\|_{A^p} = 1$ imply that 
$G$ is the canonical divisor.  

The case where $z_1 = 0$ is similar, but we also use the fact that 
$\bp(z^j \conj{G}^{(p/2)-1})$ is a polynomial of degree at most 
$j - [(p/2)-1]d_1$, or zero if $j < [(p/2)-1]d_1.$ 
\end{proof}

From previous work by MacGregor and Stessin \cite{MacGregor_Stessin}, 
a weaker form of Theorem \ref{canonical_nec} is essentially known.  
In the weaker form of 
the theorem, one only knows, in the case that no $z_n = 0$, that
\[
G(z) = c_0 + \sum_{n=0}^N \frac{b_n}{1-\conj{z_n}z} + 
\sum_{n=1}^N \sum_{j=0}^{(p/2)d_n - 1} 
         \frac{c_{nj}}{(1-\conj{z_n}z)^{j+2}} 
\]
for some constants $b_n$.  The case where $z_1=0$ is similar.  
(Although their work also gives a fairly explicit method of 
finding the canonical divisor, it does not seem to be 
clear from their results that the $b_n$ will always be zero.)
To derive 
Theorem \ref{canonical_nec_suf} from the weaker 
form of the theorem, we can use the following 
proposition, which gives another indication of why the residues of 
$G^{p/2}$ must all be zero.  It basically says that nonzero residues 
would lead to 
terms in $\bp(G^{p/2} \conj{G}^{(p/2)-1})$ that were kernels of functionals 
of the general form 
$$f \mapsto  \frac{1}{a} \int_0^{a} f(z)g(z)\, dz,$$
where $g$ is an analytic function and $a \in \D$.
But, as the proposition explains, it would then be impossible for 
$\bp(G^{p/2} \conj{G}^{(p/2)-1})$ to be the kernel of a finite linear 
combination of derivative-evaluation functionals.

\begin{proposition}
Let $g$ be analytic on $\D$ and continuous on $\overline{\D}$, 
and suppose $g$ is non-zero on 
$\partial \D.$  Let $a_n \in \D$ and $a_n \ne 0$
for $1 \le n \le N$, and 
assume that $a_n \ne a_j$ for $n \ne j.$ Let  
$b_n \in \C$ for $1 \le n \le N.$ Then if any of the $b_n$ are nonzero,
$$\bp\left(\sum_{n=1}^N \frac{b_n}{1-\conj{a_n}z}\, \conj{g(z)}\right)$$
is not the kernel for a functional that is the finite linear combination of 
derivative-evaluation functionals.
\end{proposition}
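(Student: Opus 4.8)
The plan is to make the functional completely explicit and then exploit a clean dichotomy: integrating over a segment $[0,a_n]$ forces a branch-point singularity into an associated generating function, while a finite combination of derivative evaluations produces only a rational (hence single-valued) one. First I would identify $\Lambda$. Writing $k=\sum_{n=1}^N \frac{b_n}{1-\conj{a_n}z}$, Proposition \ref{kernel_for_integration} gives $\int_{\D} f/(1-a_n\conj{z})\,d\sigma = \frac1{a_n}\int_0^{a_n} f\,dz$, and since $g$ is bounded (being continuous on $\overline{\D}$) and each $1/(1-\conj{a_n}z)$ is bounded, Theorem \ref{bp_f_conjg} (with $q_2=\infty$) shows that $\bp(k\conj{g})$ is the kernel of
\[ \Lambda(f)=\sum_{n=1}^N \beta_n \int_0^{a_n} f(z)g(z)\,dz, \qquad \beta_n=\conj{b_n}/a_n, \]
where $\beta_n\ne 0$ exactly when $b_n\ne 0$. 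Suppose, for contradiction, that $\Lambda$ is also represented by a finite combination $\Lambda(f)=\sum_j \lambda_j f^{(e_j)}(w_j)$ of derivative-evaluation functionals, with all $w_j\in\D$.

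Next I would pass to generating functions in an auxiliary variable $w$. Applying both expressions to $f(z)=z^k$ and forming $H(w):=\sum_{k\ge 0}\Lambda(z^k)\,w^k$, the term-by-term identity $\sum_k(zw)^k=1/(1-zw)$ (legitimate for small $|w|$, since $|\Lambda(z^k)|=O(\rho^k)$ with $\rho=\max_n|a_n|<1$) yields $H(w)=\sum_{n=1}^N \beta_n \int_0^{a_n} g(z)/(1-zw)\,dz$. On the derivative-evaluation side the same generating function sums to $\sum_j \lambda_j\, e_j!\, w^{e_j}/(1-w_j w)^{e_j+1}$, a rational function of $w$, single-valued and meromorphic on all of $\C$ with poles only at the points $1/w_j$. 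After the substitution $z=a_n t$, each summand $H_n(w)=\beta_n a_n\int_0^1 \Psi_n(t)/(1-a_n w t)\,dt$, with $\Psi_n(t)=g(a_nt)$ analytic in a neighborhood of $[0,1]$, is analytic on $\C$ off the ray $R_n=\frac1{a_n}[1,\infty)$ and carries a branch-type singularity emanating from the branch point $1/a_n$, which has modulus $>1$. Since the two generating functions agree near $0$, they agree on the connected common domain of continuation.

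The crux is that a finite sum of such branch-cut functions cannot equal a single-valued rational function, and I would make this precise by isolating one cut. Choose $n_0$ with $\beta_{n_0}\ne 0$ and $|a_{n_0}|$ maximal; then for every other $n$ with $\beta_n\ne 0$ one has $1/a_{n_0}\notin R_n$ (if $1/a_{n_0}=s/a_n$ with $s\ge 1$ then $a_n=s\,a_{n_0}$, forcing $|a_n|\ge |a_{n_0}|$, and equality gives $a_n=a_{n_0}$). Hence $\sum_{n\ne n_0}H_n$ is analytic in a neighborhood of $1/a_{n_0}$, as is the rational function away from its finitely many poles, so $H_{n_0}$ would have \emph{zero} jump across $R_{n_0}$ near $1/a_{n_0}$. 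But a Sokhotski--Plemelj computation at the simple pole $t=1/(a_nw)$ shows that across the point $w=x/a_{n_0}$, with $x>1$, the two boundary values of $H_{n_0}$ differ by a nonzero scalar multiple of $\Psi_{n_0}(1/x)=g(a_{n_0}/x)$; since $g\not\equiv 0$ (guaranteed by $g\ne 0$ on $\partial\D$), this jump is nonzero for all but isolated $x$, in particular for $x$ arbitrarily close to $1$. That is the contradiction.

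The step I expect to be the main obstacle is exactly this last one: rigorously confirming that $H_{n_0}$ has a genuine, non-removable (non-meromorphic) singularity across $R_{n_0}$ arbitrarily close to the branch point, and handling the case $g(a_{n_0})=0$, where the leading logarithmic coefficient of the jump vanishes at $1/a_{n_0}$ itself but the jump $g(a_{n_0}/x)$ is still nonzero just beyond it. One must also verify that distinct cuts cannot cancel one another, which is precisely what the maximal-modulus choice of $n_0$ secures, by placing the point $1/a_{n_0}$ off every other ray $R_n$.
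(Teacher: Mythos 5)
Your proposal is correct, but it runs along a genuinely different track from the paper's proof, and both of the obstacles you flag at the end are resolvable by standard means. The two arguments share only the first step: identifying $\bp\bigl(\sum_n b_n(1-\conj{a_n}z)^{-1}\conj{g}\bigr)$, via Proposition \ref{kernel_for_integration} and Theorem \ref{bp_f_conjg}, as the kernel of $\Lambda(f)=\sum_n \beta_n\int_0^{a_n}fg\,dz$ (your bookkeeping $\beta_n=\conj{b_n}/a_n$ is in fact more careful than the paper's, which writes $b_n/a_n$; since the representation is conjugate-linear, only $\beta_n\ne 0\iff b_n\ne 0$ matters). From there the paper argues by finite interpolation rather than analysis: assuming $\Lambda(f)=\sum\lambda_j f^{(k)}(z_j)$, it restricts to the subspace $gA^p$, observes that $g\ne 0$ on $\partial\D$ forces $g$ to have finitely many zeros so that any polynomial vanishing on them lies in $gA^p$, and then for each $m$ builds a polynomial $H_m$ with $H_m(0)=0$, $H_m(a_n)=\delta_{nm}$, with $H_m'$ divisible by the zeros of $g$ and with derivatives of $H_m$ vanishing to high enough order at the $z_j$; applying $\Lambda$ to $h_m=H_m'$ kills every derivative-evaluation term while the fundamental theorem of calculus gives $\Lambda(h_m)=\beta_m\ne 0$, a contradiction obtained in one line. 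Your route instead encodes $\Lambda$ in the generating function $H(w)=\sum_k\Lambda(z^k)w^k$, which must simultaneously be rational (poles at $1/w_j$, modulus $>1$) and a sum of Cauchy-type transforms with jump discontinuities across the rays $R_n=\{s/a_n: s\ge 1\}$; your maximal-modulus selection of $n_0$ correctly isolates one cut, and the Sokhotski--Plemelj jump at $w=x/a_{n_0}$ is $-2\pi i\beta_{n_0}g(a_{n_0}/x)/w$, which vanishes only at isolated $x$ because $g\not\equiv 0$ — this disposes of the $g(a_{n_0})=0$ case you worried about. The continuation step is even cleaner than you suggest: all the rays $R_n$ are radial, so $\C\setminus\bigcup_n R_n$ is star-shaped about the origin, and the identity $\sum_n H_n=R$ propagates immediately to both one-sided limits at $x/a_{n_0}$ for $x>1$ avoiding the finitely many poles of $R$ and (in the collinear case $a_n=ca_{n_0}$, necessarily $0<c<1$ by maximality) taking $1<x<1/c$. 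As for what each approach buys: the paper's proof is shorter, stays within the paper's toolkit, and exploits the specific structure of integration from the origin (so that $H_m(0)=0$ applies), but it genuinely uses the hypothesis that $g\ne 0$ on $\partial\D$; your proof needs only $g\not\equiv 0$ together with boundedness of $g$ on the segments $[0,a_n]$ (automatic for $g$ analytic in $\D$), so it establishes a slightly stronger statement, and it explains the obstruction structurally — the kernel's symbol would need a non-removable branch-type singularity, which rationality forbids — at the cost of Plemelj-type machinery.
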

Note that as is shown in \cite{DKSS_Pac} 
(see also \cite{DKS}, \cite{Sundberg}, and \cite{D_Ap}), 
the canonical divisor of a 
finite zero set is analytic in $\overline{\D}$ and non-zero on 
$\partial \D.$  This allows the proposition to be applied to Bergman 
projections of the form 
\[
\bp\left(\sum_{n=1}^N \frac{b_n}{1-\conj{a_n}z}\, \conj{G(z)^{(p/2)-1}}\right).
\]
\begin{proof}
We know by 
Proposition \ref{kernel_for_integration} that 
 $$\bp\left(\sum_{n=1}^N \frac{b_n}{1-\conj{a_n}z} \, \conj{g(z)}\right)$$
is the kernel for the functional given by
$$f \mapsto \sum_{n=1}^N \frac{b_n}{a_n} \int_0^{a_n} f(z)g(z)\, dz.$$

Suppose this functional were a linear combination of 
derivative-evaluation functionals, which we will denote by  
$f \mapsto f^{(k)}(z_j),$ where 
$1 \le j \le J$ and $0 \le k \le K.$ 
Let $h$ be a function such that $h = gf$ for some $f \in A^p$.
For fixed $g$, the values 
$f^{(k)}(z_j)$ for $1 \le j \le J$ and $0 \le k \le K$ are linear combinations
 of the 
values 
$h^{(k)}(z_j)$, where $1 \le j \le J$ and $0 \le k \le K + r(z_j)$, and 
$r(z_j)$ is the order of the zero of $g$ at $z_j.$ 
Thus the functional defined on the space $gA^p$ by  
$$h \mapsto \sum_{n=1}^N \frac{b_n}{a_n} \int_0^{a_n} h(z)\, dz$$
must be a linear function of the values $h^{(k)}(z_j)$.  By $gA^p$, we mean 
the vector space of all functions that may be written as 
$g$ multiplied by an $A^p$ function. 
Since $g$ is analytic in  $\overline{\D}$ 
and $g$ is nonzero on $\partial \D$, any 
polynomial that has all the zeros of $g$ will be in $gA^p$.

Now for each $m$ there exists a polynomial $H_m$ such that $H_m(a_m) = 1$, but 
$H_m(a_n) = 0$ for all $n\ne m$, and such that 
$H_m^{(k)}(z_j) = 0$ for all $j$ and $k$ such that $1 \le j \le J$ and 
$1 \le k \le K + r(z_j) + 1$.
Also, we may require that $H_m'$ has all the zeros of $g$, 
and that $H_m(0) = 0.$ 
Set $h_m = H_m'$.  Then $h_m$ shares all the zeros of $g$, and so it is a 
multiple of $g$. Thus  
$$\sum_{n=1}^N \frac{b_n}{a_n} \int_0^{a_n} h_m(z)\, dz = 0,$$
since 
the left side of the above equation is a linear combination of the numbers
$h_m^{(k)}(z_j)$ for  
$1 \le j \le J$ and $0 \le k \le K + r(z_j)$, and each  
$h_m^{(k)}(z_j)=0$. 
But also, for each $m$ such that $1 \le m \le N,$ we have 
$$\sum_{n=1}^N \frac{b_n}{a_n} \int_0^{a_n} h_m(z)\, dz = 
\sum_{n=1}^N \frac{b_n}{a_n} H_m(a_n) = \frac{b_m}{a_m},$$
so each $b_m = 0.$  
\end{proof}

\begin{example}
Suppose we are given distinct points $z_1, z_2, \ldots, z_N \in \D\setminus \{0\}.$  Let $p=2M$, where 
$M$ is a positive integer.  Suppose we wish to find the canonical divisor in $A^p$ for the given set of 
points.  From the theorem, we know that
\[
G(z)^M = c_0 + \sum_{n=1}^N \sum_{m=0}^{M-1} \frac{c_{nm}}{(1-\conj{z_n}z)^{m+2}}.
\]
Then we have for $0 \le k \le M-1$ and $1 \le n \le N$ that
\[
0 = \frac{d^k}{dz^k}\left( G(z)^M \right)|_{z=z_j} = 
\frac{d^k}{dz^k}c_0 + 
\sum_{n=1}^N \sum_{m=0}^{M-1}  \frac{\conj{z_n}^k (m+1+k)!/(m+1)!}{(1-\conj{z_n}z_j)^{m+2+k}}
                                                            c_{nm}.
\]
This gives a system of $NM$ equations with $NM+1$ unknowns.  Because of the 
uniqueness of the canonical divisor, there will be a unique solution to these 
equations with $c_0 > 0$ and such that $\|G\|_{A^p} = 1$. 

\end{example}

\providecommand{\bysame}{\leavevmode\hbox to3em{\hrulefill}\thinspace}
\providecommand{\MR}{\relax\ifhmode\unskip\space\fi MR }
\providecommand{\MRhref}[2]{%
  \href{http://www.ams.org/mathscinet-getitem?mr=#1}{#2}
}
\providecommand{\href}[2]{#2}


\begin{thebibliography}{10}

\bibitem{Khavinson_nonvanishing}
Dov Aharonov, Catherine B{\'e}n{\'e}teau, Dmitry Khavinson, and Harold Shapiro,
  \emph{Extremal problems for nonvanishing functions in {B}ergman spaces},
  Selected topics in complex analysis, Oper. Theory Adv. Appl., vol. 158,
  Birkh\"auser, Basel, 2005, pp.~59--86. \MR{MR2147588 (2006i:30047)}

\bibitem{Beneteau_Khavinson_survey}
Catherine B{\'e}n{\'e}teau and Dmitry Khavinson, \emph{A survey of linear
  extremal problems in analytic function spaces}, Complex analysis and
  potential theory, CRM Proc. Lecture Notes, vol.~55, Amer. Math. Soc.,
  Providence, RI, 2012, pp.~33--46. \MR{2986891}

\bibitem{Conway_Functional}
John~B. Conway, \emph{A course in functional analysis}, second ed., Graduate
  Texts in Mathematics, vol.~96, Springer-Verlag, New York, 1990. \MR{1070713
  (91e:46001)}

\bibitem{DKSS_Mich}
P.~Duren, D.~Khavinson, H.~S. Shapiro, and C.~Sundberg, \emph{Invariant
  subspaces in {B}ergman spaces and the biharmonic equation}, Michigan Math. J.
  \textbf{41} (1994), no.~2, 247--259. \MR{MR1278431 (95e:46030)}

\bibitem{DKS}
Peter Duren, Dmitry Khavinson, and Harold~S. Shapiro, \emph{Extremal functions
  in invariant subspaces of {B}ergman spaces}, Illinois J. Math. \textbf{40}
  (1996), no.~2, 202--210. \MR{1398090 (97h:30069)}

\bibitem{DKSS_Pac}
Peter Duren, Dmitry Khavinson, Harold~S. Shapiro, and Carl Sundberg,
  \emph{Contractive zero-divisors in {B}ergman spaces}, Pacific J. Math.
  \textbf{157} (1993), no.~1, 37--56. \MR{MR1197044 (94c:30048)}

\bibitem{D_Ap}
Peter Duren and Alexander Schuster, \emph{Bergman spaces}, Mathematical Surveys
  and Monographs, vol. 100, American Mathematical Society, Providence, RI,
  2004. \MR{MR2033762 (2005c:30053)}

\bibitem{tjf1}
Timothy Ferguson, \emph{Continuity of extremal elements in uniformly convex
  spaces}, Proc. Amer. Math. Soc. \textbf{137} (2009), no.~8, 2645--2653.

\bibitem{tjf2}
Timothy Ferguson, \emph{Extremal problems in {B}ergman spaces and an extension
  of {R}yabykh's theorem}, Illinois J. Math. \textbf{55} (2011), no.~2,
  555--573 (2012). \MR{3020696}

\bibitem{Hansbo}
J.~Hansbo, \emph{Reproducing kernels and contractive divisors in {B}ergman
  spaces}, J. Math. Sci. (New York) \textbf{92} (1998), no.~1, 3657--3674.

\bibitem{Hedenmalm_canonical_A2}
H{\aa}kan Hedenmalm, \emph{A factorization theorem for square area-integrable
  analytic functions}, J. Reine Angew. Math. \textbf{422} (1991), 45--68.
  \MR{MR1133317 (93c:30053)}

\bibitem{Khavinson_McCarthy_Shapiro}
Dmitry Khavinson, John~E. McCarthy, and Harold~S. Shapiro, \emph{Best
  approximation in the mean by analytic and harmonic functions}, Indiana Univ.
  Math. J. \textbf{49} (2000), no.~4, 1481--1513. \MR{MR1836538 (2002b:41023)}

\bibitem{Khavinson_Stessin}
Dmitry Khavinson and Michael Stessin, \emph{Certain linear extremal problems in
  {B}ergman spaces of analytic functions}, Indiana Univ. Math. J. \textbf{46}
  (1997), no.~3, 933--974. \MR{MR1488342 (99k:30080)}

\bibitem{KhavinsonSYa1949}
S.~Ya. Khavinson, \emph{On an extremal problem of the theory of analytic
  functions}, Uspehi Matem. Nauk (N.S.) \textbf{4} (1949), no.~4(32), 158--159.
  \MR{0033887 (11,508e)}

\bibitem{KhavinsonSYa1951}
\bysame, \emph{On some extremal problems of the theory of analytic functions},
  Moskov. Gos. Univ. U\v cenye Zapiski Matematika \textbf{148(4)} (1951),
  133--143. \MR{0049322 (14,155f)}

\bibitem{Luciano_Naris_Schuster}
Cyrus Luciano, Lothar Narins, and Alexander Schuster, \emph{Some properties of
  the canonical divisor in the {B}ergman space}, Int. J. Pure Appl. Math.
  \textbf{48} (2008), no.~4, 585--594. \MR{2482574 (2010g:30065)}

\bibitem{MacGregor_Stessin}
T.~H. MacGregor and M.~I. Stessin, \emph{Weighted reproducing kernels in
  {B}ergman spaces}, Michigan Math. J. \textbf{41} (1994), no.~3, 523--533.
  \MR{MR1297706 (95k:46038)}

\bibitem{Macintyre_Rogosinski}
A.~J. Macintyre and W.~W. Rogosinski, \emph{Extremum problems in the theory of
  analytic functions}, Acta Math. \textbf{82} (1950), 275--325. \MR{0036314
  (12,89e)}

\bibitem{Osipenko_Stessin}
K.~Yu. Osipenko and M.~I. Stessin, \emph{On optimal recovery of a holomorphic
  function in the unit ball of {${\bf C}^n$}}, Constr. Approx. \textbf{8}
  (1992), no.~2, 141--159. \MR{MR1152873 (93d:32008)}

\bibitem{Rogosinski_Shapiro}
W.~W. Rogosinski and H.~S. Shapiro, \emph{On certain extremum problems for
  analytic functions}, Acta Math. \textbf{90} (1953), 287--318. \MR{0059354
  (15,516a)}

\bibitem{Ryabykh_certain_extp}
V.~G. Ryabych, \emph{Certain extremal problems}, Nauchnye Soobscheniya R.G.U.
  (1965), 33--–34 ((in Russian)).

\bibitem{Ryabykh}
V.~G. Ryabykh, \emph{Extremal problems for summable analytic functions},
  Sibirsk. Mat. Zh. \textbf{27} (1986), no.~3, 212--217, 226 ((in Russian)).
  \MR{MR853902 (87j:30058)}

\bibitem{Shapiro_Approx}
Harold~S. Shapiro, \emph{Topics in approximation theory}, Springer-Verlag,
  Berlin, 1971, With appendices by Jan Boman and Torbj\"orn Hedberg, Lecture
  Notes in Math., Vol. 187. \MR{MR0437981 (55 \#10902)}

\bibitem{SheilSmall_2011}
T.~Sheil-Small, \emph{Area extremal problems for non-vanishing functions},
  Proc. Amer. Math. Soc. \textbf{139} (2011), no.~9, 3231--3245. \MR{2811279
  (2012f:30097)}

\bibitem{SheilSmall_general2012}
\bysame, \emph{Bergman space extremal problems for non-vanishing functions}, J.
  Anal. Math. \textbf{118} (2012), no.~1, 1--17. \MR{2993020}

\bibitem{SheilSmall_solution2012}
\bysame, \emph{Solution to a {B}ergman space extremal problem for non-vanishing
  functions}, Bull. Lond. Math. Soc. \textbf{44} (2012), no.~4, 763--774.
  \MR{2967244}

\bibitem{Sundberg}
Carl Sundberg, \emph{Analytic continuability of {B}ergman inner functions},
  Michigan Math. J. \textbf{44} (1997), no.~2, 399--407. \MR{1460424
  (98h:46022)}

\bibitem{Dragan}
Dragan Vukoti{\'c}, \emph{Linear extremal problems for {B}ergman spaces},
  Exposition. Math. \textbf{14} (1996), no.~4, 313--352. \MR{MR1418027
  (97m:46117)}

\end{thebibliography}
\end{document}